\newtheorem{conjecture}{Conjecture}
\pgfplotsset{compat=newest}
\newtheorem{theorem}{Theorem}[section]
\newtheorem{lemma}[theorem]{Lemma}
\theoremstyle{definition}
\newtheorem{definition}[theorem]{Definition}
\newtheorem{proposition}[theorem]{Proposition}
\newtheorem{corollary}[theorem]{Corollary}
\theoremstyle{remark}
\newtheorem{remark}[theorem]{Remark}
\numberwithin{equation}{section}
\newcommand\isomto{\stackrel{\sim}{\smash{\longrightarrow}\rule{0pt}{0.4ex}}}
\begin{document}

\title{Known cases of the Hodge conjecture}

\author{Genival Da Silva Jr.}

\address{\newline Department of Mathematics \newline Eastern Illinois University\newline Charleston, IL}
\email{jrrieman@gmail.com}

%\keywords{Monodromy, Elliptic surfaces, Exceptional Lie groups}

\begin{abstract}
The Hodge conjecture is a major open problem in complex algebraic geometry. In this survey, we discuss the main cases where the conjecture is known, and also explain an approach by Griffiths-Green to solve the problem.
\end{abstract}

\maketitle

In his 1950 ICM address \cite{hodge}, Hodge proposed several questions concerning the topology and geometry of algebraic varieties. The majority of those problems were solved in the following years but one survived to this day. What now is known as \textit{the Hodge conjecture}, remains one of the more interesting, and deepest unsolved problem in complex algebraic geometry.

Hodge initially stated his question using integral cohomology classes. As stated, the conjecture would then be false. Atiyah and Hirzebruch \cite{atiyah} gave the first proof of this fact in the sixties. The rational coefficients version of his conjecture is what we know today as the Hodge conjecture.

The conjecture itself is a generalization of a result of Lefschetz \cite{LEFSCHETZ} proved years earlier, even before Hodge formulated his problem. The Lefschetz theorem on $(1,1)$-classes is the Hodge conjecture with integral coefficients in codimension one. The method of the proof employed by Lefschetz was to use a tool developed by Poincaré, called normal function. In modern algebraic geometry language, a more simple and elegant proof can be given, see \hyperref[sec1]{Section 1}.

Unfortunately, Lefschetz's proof can't be generalized to higher dimensions due to the fact that a crucial condition used in his proof, Jacobi inversion, can fail in codimension greater than one \cite{shioda2}. Despite this, a generalization of his technique was proposed by Phillip Griffiths \& Mark Green in the early 2000s, which uses normal functions as well, albeit differently, more on this in \hyperref[sec3]{Section 3}.

Over the years, many special cases of the conjecture were proven, most notably, many cases involving Abelian varieties. In most of cases, the idea is to use an additional structure of the projective variety. For example, the abelian condition, group action or symmetry of the defining equations, varieties with many lines on it, cohomology ring generated by divisors and so on.

The idea of this manuscript is to describe the main known cases of the Hodge conjecture and discuss a possible approach to the proof. The list is not meant to be a comprehensive list of every single case proved so far, but a place where the main cases can be found. The importance given to the examples are from the author's own personal taste and it's possible that relevant examples were left out of this text. Some topics related to the conjecture are not discussed here, like the General Hodge conjecture, the Tate conjecture, absolute Hodge classes, variational Hodge conjecture and the Standard conjectures.

In \hyperref[sec1]{Section 1} we introduce the notation of the paper, state the Hodge conjecture and discuss the Lefschetz's theorem on $(1,1)$-classes. \hyperref[sec2]{Section 2} reviews all known relevant cases of the conjecture. \hyperref[sec3]{Section 3} describes the Griffiths-Green program to solve the Hodge conjecture using singularities of normal functions. 

\subsection*{Acknowledgements} I thank Prof. Matt Kerr and Prof. Phillip Griffiths for many discussions regarding the Hodge conjecture and related topics.

\section{The statement of the problem}\label{sec1}
One of the motivations behind the Hodge conjecture was the following theorem by Lefschetz. 
\begin{theorem}(Lefschetz's theorem on $(1,1)$-classes)\label{lef11}
Let $X$ be a compact Kahler manifold, then the cycle class map $$c_1:H^1(X,\mathcal{O}^\times)\rightarrow H^{1,1}(X)\cap H^{2}(X,\mathbb{Z})$$ is surjective.
\end{theorem}
\begin{proof}
Consider the exponential exact sequence over $X$:
$$0\to\mathbb{Z}\xrightarrow{i} \mathcal{O}\xrightarrow{exp} \mathcal{O}^\times\to 0$$ where $i$ is the inclusion and $exp$ is the exponential map $exp(f)=e^f$. Taking cohomology we get:
$$...\to H^1(X,\mathcal{O}^\times) \xrightarrow{c_1} H^{2}(X,\mathbb{Z}) \xrightarrow{i_*} H^2(X,\mathcal{O})\to ...$$
Note that by definition of the Chern Class, we have $c_1(L)\in H^{1,1}(X)$ for any complex line bundle $L$. On the other hand, $\ker i_*$ is the image of $c_1$ and $H^2(X,\mathcal{O})\cong H^{0,2}(X)$, hence $i_*=0$ on $H^{1,1}(X)\cap H^{2}(X,\mathbb{Z})$ and $c_1$ is surjective.
\end{proof}
\begin{remark}
Note that $H^1(X,\mathcal{O}^\times)\cong Pic(X)$ and since $X$ is smooth, $Pic(X)\cong CH^1(X)$.
\end{remark}
Lefschetz proved his theorem only for surfaces and at the time of his proof, the theory of sheaf cohomology was not developed yet, so his approach to the problem was rather different from the one above.
His idea was to use a tool developed by Poincaré called \textbf{normal function}. To describe normal functions we need a few preliminaries first. Let $S$ be a smooth complex projective surface. A Lefschetz's pencil on $S$ is a family of hyperplane sections $\{C_t\}_{t\in\mathbb{P}^1}$ such that for finitely many $t\in\mathbb{P}^1$, $C_t$ has a unique singularity and it is a node. The existence of Lefschetz's pencil is not automatic from the definition, a proof can be found in \cite{voisin2,katz1,frei}.

After possibly blowing up the base locus, the Lefschetz's pencil gives a morphism $\pi:\tilde{S}\to \mathbb{P}^1$, where the fiber $C_t$ has a node for $t\in \{t_1,\dots,t_m\}$. The (extended) intermediate Jacobian bundle $\mathcal{J}_e$ is defined by
\begin{equation}\label{jacobian_e}
    0\to R^1\pi_*\mathbb{Z}\to R^1\pi_*\mathcal{O}_{\tilde{S}}\to \mathcal{J}_e\to 0 
\end{equation}
Poincaré then defined a normal function as a section of $\mathcal{J}_e$. 

An algebraic cycle $Z\in CH^1(S)$ defines a normal function $\nu_Z$ as follows. Let $Z_t:=Z\cdot C_t\in Div^\circ (C_t)$, then we define $\nu_Z(t)=AJ(Z_t)$, where $AJ$ is the Abel-Jacobi map.

Taking cohomology of \ref{jacobian_e} we get
\begin{equation}\label{classm}
0\to Pic^\circ(S)\to \Gamma(\mathbb{P}^1,\mathcal{J}_e)\xrightarrow{[.]} H^1(\mathbb{P}^1,R^1\pi_*\mathbb{Z})\cong H^2(S,\mathbb{Z})_{prim}\to H^1(\mathbb{P}^1,R^1\pi_*\mathcal{O}_{\tilde{S}})
\end{equation}
Lefschetz analyzed the map $[.]$ in \ref{classm} and proved that $\zeta=[\nu]$ for some normal function $\nu$, if and only if $\zeta\in H^{1,1}(X)\cap H^2(S,\mathbb{Z})_{prim}$. He also proved that $[\nu_Z]=[Z]$ for any cycle $Z$.

Poincare had proved earlier that under the above hypothesis, \textbf{Jacobi inversion} holds, which is the statement in this case that $\nu=\nu_Z$ for some algebraic cycle $Z$, which completes Lefschetz's original proof of his theorem.

Unfortunately, Jacobi inversion does not hold in general, so Lefschetz's approach can't be generalized, at least not using Jacobi inversion to generalize Poincaré's result. In early 2000's, Griffiths and Green \cite{gg} gave an approach to prove the Hodge conjecture inductively, using normal functions. We'll discuss their program in \hyperref[sec3]{Section 3}.

The modern statement of the Hodge conjecture is the following:

\begin{conjecture}\label{hc}
Let $X$ be smooth complex projective variety of dimension $n$ and $p$ a number with $p\leq n$. The (rational) cycle class map
 $$cl_{\otimes\mathbb{Q}}:CH^p(X)\otimes\mathbb{Q}\rightarrow H^{p,p}(X)\cap H^{2p}(X,\mathbb{Q})$$ is surjective.
\end{conjecture}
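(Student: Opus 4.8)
The statement to be established is the Hodge conjecture itself, which remains open; accordingly, what follows is a strategy rather than a complete argument, organized around the normal-function method whose codimension-one prototype appears above.

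First I would dispose of the structural reductions. The case $p=1$ is already settled by Theorem \ref{lef11} together with the remark identifying $H^1(X,\mathcal{O}^\times)$ with $\mathrm{Pic}(X)\cong CH^1(X)$. For general $p$, the hard Lefschetz theorem lets me assume $2p\le n$, and the Lefschetz decomposition reduces the problem to primitive Hodge classes $\zeta\in H^{p,p}(X)\cap H^{2p}(X,\mathbb{Q})_{\mathrm{prim}}$. A standard induction on dimension via the Lefschetz hyperplane theorem then reduces matters to the middle-dimensional case $2p=\dim X$, where primitive classes are detected by the cohomology of hyperplane sections. This is exactly the regime where Lefschetz's surface argument lived, with $p=1$ and $\dim X=2$.

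Second, following the template of \ref{classm}, I would attach a normal function to $\zeta$. Choose a Lefschetz pencil on $X$ and, after blowing up the base locus, form $\pi:\tilde{X}\to\mathbb{P}^1$ with an intermediate Jacobian bundle $\mathcal{J}$ built from the variation of Hodge structure on the primitive middle cohomology of the fibers, generalizing \ref{jacobian_e}. The class $\zeta$, transported along the pencil, determines a section $\nu_\zeta$ of an extension of $\mathcal{J}$ over $\mathbb{P}^1$; the higher-codimension analogue of Lefschetz's computation should show that the topological invariant $[\nu_\zeta]$ recovers $\zeta$, so that $\nu_\zeta$ is an admissible normal function with the prescribed cohomology class. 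The objective is then to realize $\nu_\zeta$ as $\nu_Z$ for an algebraic cycle $Z$, in the spirit of the identity $[\nu_Z]=[Z]$ recorded above.

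The main obstacle is precisely the step that worked for surfaces and fails in general, namely Jacobi inversion. In codimension greater than one there is no reason for an arbitrary admissible normal function to be of the form $\nu_Z$, so the naive generalization collapses. The Griffiths--Green program, discussed in \hyperref[sec3]{Section 3}, replaces inversion by a singularity criterion: $\zeta$ is algebraic if and only if $\nu_\zeta$ acquires a nonzero singularity at some point of the discriminant locus of the pencil, equivalently a nontrivial invariant in the local cohomology attached to the limiting mixed Hodge structure at a degenerate fiber. The whole difficulty then concentrates into proving the \emph{existence} of such a singularity, and this is where I expect the argument to stall: the existence of singularities is governed by delicate Hodge-theoretic positivity and by the asymptotic behavior of the period map near the singular fibers, it is not implied by admissibility alone, and it is in fact equivalent to the very cases of the conjecture one is trying to establish. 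The available tools, admissibility and the algebraicity of the zero locus of a normal function, control the geometry of $\nu_\zeta$ but do not force a singularity to appear, which is exactly why the Hodge conjecture remains open.
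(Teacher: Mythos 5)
This statement is Conjecture \ref{hc}, the Hodge conjecture itself; the paper offers no proof of it, and you rightly do not claim one. Your structural reductions (Hard Lefschetz, Lefschetz decomposition to primitive classes, induction to the middle-dimensional case $n=2p$), the attachment of an admissible normal function $\nu_\zeta$ to a primitive Hodge class $\zeta$, and your identification of the terminal obstruction --- the \emph{existence} of singularities, which by Theorem \ref{gg_theorem} is equivalent to the Hodge conjecture rather than a tool for proving it --- all track Section \ref{sec3} of the paper faithfully. As a description of why the problem is open, your account is essentially the paper's.

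There is, however, one genuine technical error in your setup: you propose to run the Griffiths--Green machinery over a Lefschetz pencil, i.e.\ over a base $\mathbb{P}^1$. The program cannot be run there, and this is not incidental. Over a pencil, the singular fibers $X_s$ have a single node; for odd-dimensional fibers with rationally nontrivial vanishing cycle, the restriction map $H^{2p}(X_s,\mathbb{Q})\to H^{2p}(X_t,\mathbb{Q})$ to a nearby smooth fiber is injective, while a \emph{primitive} class $\zeta$ restricts to zero on every smooth hyperplane section (this vanishing is exactly what makes $\nu_\zeta$ well defined in the first place). Hence $\zeta$ restricts trivially to every fiber of the pencil, singular or not, and $sing_s(\nu_\zeta)=0$ at every $s$: the singularity criterion of Theorem \ref{gg_theorem} detects nothing on a pencil. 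This is precisely why Lefschetz's pencil argument does not generalize beyond $p=1$, and it is why Conjecture \ref{gg_c} is formulated over the \emph{complete} linear system $\bar{S}=|L^{\otimes d}|$ with $d>0$ allowed to be large: nontrivial singularities can only appear along the singular locus of the dual variety, i.e.\ at sections with many nodes (compare Thomas's theorem invoked in the paper's sketch), and such strata are invisible to a generic one-parameter family. A smaller imprecision: the criterion is not a class-by-class equivalence ``$\zeta$ algebraic iff $\nu_\zeta$ singular''; in the paper's sketch the passage from a nonzero singularity to algebraicity of $\zeta$ consumes the inductive hypothesis (the Hodge conjecture in lower dimension), so what Theorem \ref{gg_theorem} provides is an equivalence of the two conjectures globally, not a pointwise test for individual classes.
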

In our notation, $cl_{\otimes\mathbb{Q}}(\sum a_iX_i) = \sum a_i[X_i]$, $a_i\in\mathbb{Q}$ and $[X_i]$ is the class of the subvariety $X_i$. 

\begin{remark}
Note that Lefschetz's theorem is true over $\mathbb{Z}$, but in higher codimension, the $\otimes\mathbb{Q}$ is necessary. Atiyah \& Hirzebruch \cite{atiyah} constructed a counter-example for torsion hodge classes.
\end{remark}
\begin{remark}
The fact that $X$ is not only compact Kahler but is projective is also necessary. Zucker \cite{zucker} gave a counter-example in case of a complex torus that wasn't an algebraic variety.
\end{remark}
\begin{remark}
Hodge also stated another conjecture on algebraic cycles. Known today as the \textbf{Hodge General Conjecture}, it is also about cohomology classes and algebraic subvarieties but much more general than the conjecture we are discussing here, in particular it implies conjecture \ref{hc}. As stated by Hodge, his general conjecture was false, Grothendieck gave a counter-example and proposed an amended version of it in \cite{gro1}.
\end{remark}

\section{Known cases of the conjecture}\label{sec2}
\textbf{Unless otherwise stated, $X$ will be a smooth complex projective variety.} All results stated here are well known and most of them were proved many decades ago. See also \cite{shioda2,lewis}.

Recall  the following results of Lefschetz:
\begin{theorem}(Lefschetz's theorem on hyperplane sections)\label{lh}
Let $i:Y\to X$ be a inclusion of a hyperplane section in $X$. Then the natural map
$$i^*:H^k(X,\mathbb{Z})\to H^k(Y,\mathbb{Z})$$
is an isomorphism for $k<n-1$ and is injective if $k=n-1$.
\end{theorem}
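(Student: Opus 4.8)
The plan is to convert the statement about the restriction map $i^*$ into a vanishing statement for the relative cohomology of the pair $(X,Y)$, and then to compute that relative cohomology by duality together with the affineness of the complement $U:=X\setminus Y$. From the long exact cohomology sequence of the pair,
$$\cdots \to H^k(X,Y;\mathbb{Z}) \to H^k(X,\mathbb{Z}) \xrightarrow{i^*} H^k(Y,\mathbb{Z}) \to H^{k+1}(X,Y;\mathbb{Z}) \to \cdots,$$
one reads off that $i^*$ is injective exactly when the incoming map from $H^k(X,Y)$ vanishes and surjective exactly when the outgoing map into $H^{k+1}(X,Y)$ vanishes. Hence it suffices to prove the single vanishing statement $H^j(X,Y;\mathbb{Z})=0$ for all $j\le n-1$: this gives the isomorphism for $k<n-1$ (using the groups in degrees $k$ and $k+1\le n-1$) and injectivity for $k=n-1$ (using degree $n-1$).

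Next I would identify these relative groups with the homology of the complement. Since $X$ is a compact oriented $2n$-manifold and $Y$ is compact, Alexander–Lefschetz duality gives $H^k(X,Y;\mathbb{Z})\cong H_{2n-k}(U;\mathbb{Z})$. Concretely, passing to a regular (for smooth $Y$, tubular) neighborhood $T$ of $Y$ and setting $W:=X\setminus\mathrm{int}(T)$, one has a homotopy equivalence $Y\hookrightarrow T$ and a deformation retraction of $U$ onto the compact manifold-with-boundary $W$; excision yields $H^k(X,Y;\mathbb{Z})\cong H^k(W,\partial W;\mathbb{Z})$, and Poincaré–Lefschetz duality for $W$ gives $H^k(W,\partial W;\mathbb{Z})\cong H_{2n-k}(W;\mathbb{Z})\cong H_{2n-k}(U;\mathbb{Z})$. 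Under this identification, the target vanishing $H^k(X,Y;\mathbb{Z})=0$ for $k\le n-1$ becomes precisely $H_j(U;\mathbb{Z})=0$ for $j>n$.

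The geometric input that produces this vanishing is that $U$ is affine. If $X\subset\mathbb{P}^N$ and $Y=X\cap H$ for a hyperplane $H$, then $U=X\cap(\mathbb{P}^N\setminus H)$ is a closed subvariety of $\mathbb{P}^N\setminus H\cong\mathbb{A}^N$, hence a smooth affine variety of complex dimension $n$ (smooth because it is open in the smooth $X$). I would then invoke the theorem of Andreotti–Frankel: a smooth affine variety of complex dimension $n$ has the homotopy type of a CW complex of real dimension at most $n$. Consequently $H_j(U;\mathbb{Z})=0$ for $j>n$, which is exactly the vanishing demanded by the previous step, completing the argument.

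The main obstacle, and the genuine content of the proof, is the Andreotti–Frankel step; everything else is formal homological algebra and duality. The standard route is Morse theory: embed $U\subset\mathbb{C}^N$ and study the squared-distance function $\phi(z)=\lvert z-p\rvert^2$ for generic $p$, which is a proper exhausting Morse function on $U$. The crucial estimate is that at each critical point the negative-definite subspace of the real Hessian has dimension at most $n$, because the complex structure pairs up eigenvalues so as to forbid large negative index; this bounds every Morse index by $n$ and builds $U$ from cells of dimension $\le n$. Verifying this index inequality is the technical heart of the matter. (For a singular hyperplane section $Y$ the duality step must be handled with a little more care, but $U$ remains smooth affine, so the vanishing $H_j(U)=0$ for $j>n$ persists and the conclusion is unchanged.)
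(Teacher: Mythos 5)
The paper never proves Theorem~\ref{lh}: it is stated in Section~2 under ``Recall the following results of Lefschetz'' as background material, with no argument attached, so there is no proof of record to compare yours against. Judged on its own, your proposal is correct and is the standard modern (Andreotti--Frankel) proof. The reduction via the long exact sequence of the pair to the single vanishing statement $H^j(X,Y;\mathbb{Z})=0$ for $j\le n-1$ is right, and it does deliver both the isomorphism range $k<n-1$ and the borderline injectivity at $k=n-1$; the duality identification $H^k(X,Y;\mathbb{Z})\cong H_{2n-k}(X\setminus Y;\mathbb{Z})$ is correctly invoked ($X$ being a compact oriented $2n$-manifold and $Y$ compact and triangulable); and the affineness of $U=X\setminus Y$ plus Andreotti--Frankel gives exactly the required vanishing $H_j(U;\mathbb{Z})=0$ for $j>n$. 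Two merits of your route are worth recording, since they match the precise form of the statement in the paper: it proves the theorem with \emph{integral} coefficients (the alternative Hodge-theoretic proof via Kodaira--Nakano vanishing only yields the statement over $\mathbb{C}$), and it does not require the hyperplane section $Y$ to be smooth, because only smoothness and affineness of the complement enter --- your caveat that for singular $Y$ one should replace the tubular-neighborhood argument by \v{C}ech/Alexander duality in the ambient manifold is the correct fix. The one step you leave as a black box, the bound on the Morse index of the squared-distance function at its critical points, is indeed the technical heart of the matter, and the mechanism you describe (the complex structure forcing the index to be at most $n$ on a complex $n$-dimensional submanifold of affine space) is the right one.
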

\begin{theorem}(Hard Lefschetz)\label{hl}
Let $X$ be a compact Kahler manifold with Kahler class $[\omega]\in H^{1,1}(X)\cap H^2(X,\mathbb{Z})$. The map on forms $L(x)=x\wedge [\omega]$ descend to cohomology. For $k\leq n$ we have an isomorphism
$$L^{n-k}:H^{k}(X,\mathbb{Q})\xrightarrow{\sim} H^{2n-k}(X,\mathbb{Q})$$
which is a morphism of Hodge structures of type $(1,1)$.
\end{theorem}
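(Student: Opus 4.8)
The plan is to realize the Lefschetz operator $L$ together with its metric adjoint as part of an $\mathfrak{sl}_2$-action on the total cohomology, and then to read off the theorem from the representation theory of $\mathfrak{sl}_2$. First I would fix a Kähler metric whose fundamental form represents $[\omega]$ and pass to harmonic representatives, using the Hodge isomorphism $H^k(X,\mathbb{C})\cong\mathcal{H}^k(X)$. Since $\omega$ is $d$-closed, $L\alpha=\omega\wedge\alpha$ commutes with $d$ and descends to cohomology, as the statement already asserts; its formal adjoint $\Lambda=L^*$ lowers degree by two. I also introduce the counting operator $H$ acting on $H^k(X,\mathbb{C})$ as multiplication by $(k-n)$.

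The technical heart of the argument is the package of Kähler identities, most importantly $[\Lambda,\bar\partial]=-i\partial^*$ and $[\Lambda,\partial]=i\bar\partial^*$. These are local statements, and since a Kähler metric agrees with the flat metric to second order at any given point, it suffices to verify them for the standard metric on $\mathbb{C}^n$, where they reduce to a direct computation with constant-coefficient operators. I expect this to be the main obstacle, as it is the only place the Kähler hypothesis enters in an essential way; everything afterward is formal. Two consequences follow: first, $\Delta_d=2\Delta_{\bar\partial}$, so $L$ and $\Lambda$ preserve harmonicity and therefore genuinely act on cohomology; second, the commutation relations $[L,\Lambda]=H$, $[H,L]=2L$, and $[H,\Lambda]=-2\Lambda$ hold. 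These are exactly the defining relations of $\mathfrak{sl}_2$, so $H^*(X,\mathbb{C})$ becomes a finite-dimensional $\mathfrak{sl}_2$-representation in which $L$ plays the role of the raising operator.

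Next I would invoke the classification of finite-dimensional $\mathfrak{sl}_2$-modules: every such module is a direct sum of irreducibles, and in each irreducible the $j$-th power of the raising operator restricts to an isomorphism from the weight $-j$ eigenspace onto the weight $+j$ eigenspace. Under this normalization $H^k(X,\mathbb{C})$ is the weight $(k-n)$ eigenspace and $H^{2n-k}(X,\mathbb{C})$ is the weight $(n-k)$ eigenspace. Since $k\leq n$, the weight $k-n$ is $\leq 0$, and applying the above with $j=n-k$ shows that $L^{n-k}\colon H^k(X,\mathbb{C})\to H^{2n-k}(X,\mathbb{C})$ is an isomorphism. All the relations above are defined over $\mathbb{Q}$, so the statement descends from $\mathbb{C}$ to $\mathbb{Q}$ as required.

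Finally, for the Hodge-theoretic refinement, I note that $[\omega]\in H^{1,1}(X)$, so wedging with $\omega$ carries $H^{p,q}(X)$ into $H^{p+1,q+1}(X)$; that is, $L$ is a morphism of Hodge structures of type $(1,1)$. Consequently $L^{n-k}$ shifts bidegree by $(n-k,n-k)$ and carries the Hodge decomposition of $H^k(X)$ isomorphically onto that of $H^{2n-k}(X)$, which gives the compatibility with Hodge structures asserted in the statement.
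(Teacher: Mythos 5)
The paper gives no proof of Theorem \ref{hl} at all --- it is recalled as a classical result and used as a black box --- so there is nothing internal to compare your argument against; what you have written is the standard K\"ahler-identities/$\mathfrak{sl}_2$ proof, and its architecture is sound: pass to harmonic forms, verify the K\"ahler identities pointwise for the osculating flat metric, assemble $L$, $\Lambda$, $H$ into a finite-dimensional $\mathfrak{sl}_2$-representation on cohomology, and quote the fact that in any finite-dimensional representation the $(n-k)$-th power of the raising operator maps the weight $k-n$ space isomorphically onto the weight $n-k$ space. Your handling of the Hodge-theoretic refinement (that $L$ has type $(1,1)$, so $L^{n-k}$ shifts bidegree by $(n-k,n-k)$) is also correct, and in fact more precise than the statement in the paper.

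Two steps, however, need repair in a full write-up. First, the inference ``$\Delta_d=2\Delta_{\bar\partial}$, so $L$ and $\Lambda$ preserve harmonicity'' does not follow as written: the identity $\Delta_d=2\Delta_{\bar\partial}$ by itself says nothing about $L$. What you need is the commutator $[L,\Delta_{\bar\partial}]=0$, which follows from $[L,\bar\partial]=0$ (as $\bar\partial\omega=0$) together with the dual K\"ahler identity $[L,\bar\partial^*]=-i\partial$; only then does $\Delta_d=2\Delta_{\bar\partial}$ convert this into $[L,\Delta_d]=0$, so that $L$ and $\Lambda$ act on $d$-harmonic forms. Second, the descent to $\mathbb{Q}$ is misattributed: the $\mathfrak{sl}_2$-action is \emph{not} defined over $\mathbb{Q}$ (the operators $\Lambda$ and $H$ and the harmonic spaces all depend on the chosen metric), so ``all the relations above are defined over $\mathbb{Q}$'' is not a valid justification. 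The correct argument is simpler: $L^{n-k}$ is cup product with the rational (indeed integral) class $[\omega]^{n-k}$, hence is already defined on $H^k(X,\mathbb{Q})$, and a $\mathbb{Q}$-linear map that becomes an isomorphism after $\otimes_{\mathbb{Q}}\mathbb{C}$ is an isomorphism over $\mathbb{Q}$. With these two routine repairs your proof is complete and agrees with the treatment in the standard references (Griffiths--Harris, Voisin).
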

\begin{remark}
A class $\alpha\in H^{k}(X,\mathbb{Q})$ is primitive if $L(\alpha)=0$.
\end{remark}
Since a morphism of Hodge structures preserves algebraic cycles we deduce
\begin{corollary}
For $2p\leq n$, if the Hodge conjecture is true in codimension $p$, then it's also true in codimension $n-p$.
\end{corollary}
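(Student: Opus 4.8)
The plan is to exploit the Hard Lefschetz isomorphism (Theorem \ref{hl}) together with the fact that, on a projective variety, the Lefschetz operator is cup product with an \emph{algebraic} class, so that it carries algebraic classes to algebraic classes.

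First I would set $k=2p$; since $2p\leq n$ the hypothesis of Theorem \ref{hl} is met, and it produces an isomorphism
$$L^{n-2p}:H^{2p}(X,\mathbb{Q})\xrightarrow{\sim} H^{2n-2p}(X,\mathbb{Q})$$
which is a morphism of Hodge structures of type $(n-2p,n-2p)$. Tracking Hodge types, and using $p+(n-2p)=n-p$ together with $2n-2p=2(n-p)$, it restricts to an isomorphism of the spaces of rational Hodge classes
$$L^{n-2p}:H^{p,p}(X)\cap H^{2p}(X,\mathbb{Q})\xrightarrow{\sim} H^{n-p,n-p}(X)\cap H^{2(n-p)}(X,\mathbb{Q}).$$
In particular the target is exactly the space of Hodge classes in codimension $n-p$.

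Next, given an arbitrary Hodge class $\beta$ in codimension $n-p$, I would use surjectivity of the above restricted isomorphism to write $\beta=L^{n-2p}\alpha$ for a (necessarily unique) Hodge class $\alpha$ in codimension $p$. By the assumed validity of the Hodge conjecture in codimension $p$, there exists an algebraic cycle $Z=\sum a_i Z_i\in CH^p(X)\otimes\mathbb{Q}$ with $cl_{\otimes\mathbb{Q}}(Z)=\alpha$. The key remaining step is to verify that applying $L^{n-2p}$ keeps us inside the algebraic classes. Here I would use that $X$ is projective, so the Kahler class $[\omega]$ may be taken to be the class of a hyperplane section $H$, an honest algebraic divisor; then $L^{n-2p}$ is cup product with $[\omega]^{n-2p}=[H]^{n-2p}$, and by compatibility of the cycle class map with the intersection product this equals the class of the codimension $n-p$ cycle $H^{n-2p}\cdot Z$. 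Hence $\beta=cl_{\otimes\mathbb{Q}}(H^{n-2p}\cdot Z)$ is algebraic, and the conjecture follows in codimension $n-p$.

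The main obstacle to watch is precisely this last compatibility. One must know not merely that $L^{n-2p}$ preserves Hodge classes, which is formal since it is a morphism of Hodge structures, but that it sends \emph{algebraic} classes to algebraic classes; this rests on $[\omega]$ itself being algebraic and on the cycle class map intertwining cup product with intersection. For a general compact Kahler manifold the class $[\omega]$ need not be algebraic, so projectivity is used in an essential way—indeed this is the only place where more than the formal properties of Theorem \ref{hl} enters.
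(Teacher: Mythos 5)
Your proof is correct and follows essentially the same route as the paper: invert the Hard Lefschetz isomorphism on the spaces of rational Hodge classes and transport algebraicity through the Lefschetz operator. If anything, your version is more careful than the paper's one-line justification, which appeals to the principle that "a morphism of Hodge structures preserves algebraic cycles" (false in general---that is essentially the Hodge conjecture itself); you correctly pinpoint that the real reason is that $L^{n-2p}$ is cup product with $[H]^{n-2p}$ for a hyperplane section $H$, an algebraic class, together with the compatibility of the cycle class map with intersection products.
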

By theorem \ref{lef11}, we get
\begin{corollary}
The Hodge conjecture is true in codimension $n-1$.
\end{corollary}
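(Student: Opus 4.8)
The plan is to reduce the statement to the codimension-one case, which is exactly Lefschetz's theorem on $(1,1)$-classes, and then transport it to codimension $n-1$ by Hard Lefschetz.

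First I would settle the Hodge conjecture in codimension one. By Theorem \ref{lef11} the map $c_1 : H^1(X,\mathcal{O}^\times)\to H^{1,1}(X)\cap H^2(X,\mathbb{Z})$ is surjective, and by the remark following that theorem $H^1(X,\mathcal{O}^\times)\cong \mathrm{Pic}(X)\cong CH^1(X)$. Composing these identifications shows that the integral cycle class map $CH^1(X)\to H^{1,1}(X)\cap H^2(X,\mathbb{Z})$ is already surjective. Tensoring with $\mathbb{Q}$ and using that $\big(H^{1,1}(X)\cap H^2(X,\mathbb{Z})\big)\otimes\mathbb{Q}=H^{1,1}(X)\cap H^2(X,\mathbb{Q})$, since the rational Hodge classes are spanned by integral ones, I obtain surjectivity of $cl_{\otimes\mathbb{Q}}$ in codimension one; that is, Conjecture \ref{hc} holds for $p=1$.

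Next I would invoke the previous corollary, the Hard Lefschetz duality relating codimensions $p$ and $n-p$. Taking $p=1$, the hypothesis $2p\le n$ reads $n\ge 2$, and the conclusion is precisely the Hodge conjecture in codimension $n-1$. The remaining case $n=1$ is vacuous, since codimension $n-1=0$ concerns only $H^0(X,\mathbb{Q})$, where the fundamental class of $X$ is visibly algebraic.

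I do not expect a genuine obstacle here: the entire content is already packaged into Theorem \ref{lef11} and into Hard Lefschetz, so the argument is essentially a matter of assembling them. The only point requiring a line of care is the passage from integral to rational coefficients, where one must check that intersecting with $H^2(X,\mathbb{Q})$ commutes with $\otimes\mathbb{Q}$. This is harmless in codimension one precisely because there is no room for the torsion phenomena that, as the Atiyah--Hirzebruch remark warns, obstruct the integral statement in higher codimension.
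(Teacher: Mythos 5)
Your proposal is correct and follows exactly the paper's route: codimension one is Lefschetz's $(1,1)$-theorem (Theorem \ref{lef11}), and the preceding corollary (Hard Lefschetz duality between codimensions $p$ and $n-p$) with $p=1$ transports it to codimension $n-1$. The paper compresses this into the single line ``By theorem \ref{lef11}, we get,'' so your additional care about $CH^1(X)\cong \mathrm{Pic}(X)$, the passage from $\mathbb{Z}$ to $\mathbb{Q}$ coefficients, and the trivial case $n=1$ is just a fuller write-up of the same argument.
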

Using the above we arrive at
\begin{theorem}\label{threefolds}
The Hodge conjecture is true for all $X$ with dimension less than or equal to $3$.
\end{theorem}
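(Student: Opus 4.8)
The plan is to verify the Hodge conjecture (Conjecture \ref{hc}) in every codimension $p$ with $0 \le p \le n$, under the hypothesis $n \le 3$, by checking that each such $p$ falls into a case already settled above. The key observation is that the results assembled so far handle codimensions $p \in \{0, 1, n-1, n\}$, and for $n \le 3$ these four values already exhaust $\{0, 1, \dots, n\}$; there is simply no ``middle'' codimension left to treat.

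First I would dispose of the two extreme cases. For $p = 0$ the group $H^{0,0}(X) \cap H^0(X,\mathbb{Q})$ is one-dimensional, spanned by the fundamental class $[X]$, which is manifestly algebraic, so $cl_{\otimes\mathbb{Q}}$ is surjective. Dually, for $p = n$ the target $H^{n,n}(X) \cap H^{2n}(X,\mathbb{Q}) \cong \mathbb{Q}$ is spanned by the class of a point, and the class of any closed point of $X$ generates it; hence $cl_{\otimes\mathbb{Q}}$ is again surjective. (Alternatively, the case $p = n$ follows from $p = 0$ by the duality corollary above, since $2\cdot 0 \le n$.)

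Next I would treat $p = 1$ and $p = n-1$. Codimension one is exactly Theorem \ref{lef11}: the cycle class map is surjective onto $H^{1,1}(X) \cap H^2(X,\mathbb{Z})$. To pass to rational coefficients, note that any class in $H^{1,1}(X) \cap H^2(X,\mathbb{Q})$ is, after clearing denominators, a rational multiple of a class in $H^{1,1}(X) \cap H^2(X,\mathbb{Z})$, so surjectivity over $\mathbb{Z}$ yields surjectivity over $\mathbb{Q}$; this is the substance of the remark that Lefschetz's theorem already holds integrally. Codimension $n-1$ is then the corollary obtained by applying Hard Lefschetz duality (Theorem \ref{hl}) to the $p = 1$ case. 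Combining the four settled cases, for $n \le 3$ we have $\{0,1,\dots,n\} = \{0,1,n-1,n\}$, and the theorem follows.

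I do not expect a genuine obstacle here: the entire argument reduces to the bookkeeping observation that $\{0,1,n-1,n\}$ covers $\{0,1,\dots,n\}$ precisely when $n \le 3$. The point where this breaks down, and hence the first genuinely substantive case, is $n = 4$ with $p = 2$: a middle codimension that is neither a divisor class, nor a point class, nor dual to one of these under Hard Lefschetz, and so is unreachable by the elementary reductions used here.
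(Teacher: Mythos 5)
Your proposal is correct and follows essentially the same route as the paper: the trivial codimensions $p=0$ and $p=n$ (fundamental class and point class), codimension $1$ by Theorem \ref{lef11}, and codimension $n-1$ by the Hard Lefschetz duality corollary, with the observation that these exhaust all codimensions when $n\le 3$. The paper merely organizes the same ingredients dimension by dimension ($n=1,2,3$) rather than codimension by codimension, so there is no substantive difference.
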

\begin{proof}
In dimension $1$, we have $H^0(X,\mathbb{Q})\cong [X],H^2(X,\mathbb{Q})\cong [pt]$, where $\cong$ means generated by. If $X$ has dimension 2 then $H^0(X,\mathbb{Q})\cong [X],H^4(X,\mathbb{Q})\cong [pt]$ and $H^{1,1}(X)\cap H^2(X,\mathbb{Q})$ is algebraic by theorem \ref{lef11}. In dimension $3$, we have $H^0(X,\mathbb{Q})\cong [X]$, $H^{1,1}(X)\cap H^2(X,\mathbb{Q})$ algebraic, and the Hodge classes in $H^4,H^6$ are algebraic by the above corollary.
\end{proof}
\begin{proposition}
The Hodge conjecture is true if $X$ is a flag variety, in particular, it's true for all Grassmanians $G(a,b)$.
\end{proposition}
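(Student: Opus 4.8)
The plan is to exploit the classical fact that a flag variety admits a cell decomposition all of whose cells are complex affine spaces, so that its cohomology is spanned by classes of algebraic subvarieties and is of pure Hodge--Tate type. Write $X=G/P$ for a connected semisimple linear algebraic group $G$ and a parabolic subgroup $P$. First I would recall the Bruhat decomposition: the orbits of a Borel subgroup $B$ acting on $X$ partition it into Schubert cells, indexed by the relevant cosets in the Weyl group, and each such cell is isomorphic to an affine space $\mathbb{A}^{d}_{\mathbb{C}}$. The Grassmannian $G(a,b)$ is the special case in which $G=GL_b$ and $P$ is the maximal parabolic stabilizing a fixed $a$-plane, with the cells being the usual Schubert cells of matrices in reduced row-echelon form.

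Next I would upgrade this set-theoretic decomposition to a statement in cohomology. Because every cell has \emph{even} real dimension, the cellular chain complex has vanishing differentials, so $H^{2p}(X,\mathbb{Z})$ is free abelian with basis the fundamental classes $[\overline{C}_w]$ of the Schubert varieties (the cell closures) of complex codimension $p$, while all odd cohomology vanishes. In particular the Hodge decomposition degenerates: one gets $H^{2p}(X)=H^{p,p}(X)$, so that $H^{p,p}(X)\cap H^{2p}(X,\mathbb{Q})$ is simply all of $H^{2p}(X,\mathbb{Q})$.

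Finally, each Schubert variety $\overline{C}_w$ is a closed algebraic subvariety of $X$ and hence defines a class in the image of the cycle class map. Since these classes already span $H^{2p}(X,\mathbb{Q})$, the map $cl_{\otimes\mathbb{Q}}$ is surjective onto $H^{2p}(X,\mathbb{Q})$, and a fortiori onto $H^{p,p}(X)\cap H^{2p}(X,\mathbb{Q})$, which proves the conjecture for $X$.

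The only genuinely substantive input is the second step---that the Schubert varieties furnish a basis of cohomology---which rests entirely on the absence of odd-dimensional cells in the Bruhat stratification; granting the decomposition of the first step, this is where the content lies, though all of it is classical. I do not expect a serious obstacle, since the argument reduces the Hodge conjecture to the tautological observation that when cohomology is generated by algebraic classes there is nothing left to produce.
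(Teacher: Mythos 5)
Your proof is correct and follows the same route as the paper: the paper's argument is precisely that the integral cohomology of a flag variety is generated by Schubert cycles, which are algebraic by construction. You have simply supplied the classical details (Bruhat decomposition, even-dimensional cells, degeneration to type $(p,p)$) that the paper leaves implicit.
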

\begin{proof}
The integral cohomology ring of a flag variety is generated by Schubert cycles, and the latter are algebraic by construction.
\end{proof}
\subsection*{Hypersurfaces of degree $d$ in $\mathbb{P}^{n+1}$}
Note that any hypersurface can be seen as a hyperplane section of $\mathbb{P}^{n+1}$, hence we can apply theorem \ref{lh}. Since the Hodge conjecture is trivially true for projective spaces, the only hodge classes that are left to check are the ones in $H^{n,n}\cap H^{2n}(X,\mathbb{Q})$. 
\begin{theorem}
The Hodge conjecture is true for hypersurfaces of degree 1 and 2, linear varieties and quadrics respectively. 
\end{theorem}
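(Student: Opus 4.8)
The plan is to reduce everything to the middle-dimensional cohomology and then produce an algebraic basis there. Viewing $X\subset\mathbb{P}^{n+1}$ as a hyperplane section under the degree-$d$ Veronese embedding, Theorem \ref{lh} gives that the restriction $H^k(\mathbb{P}^{n+1},\mathbb{Q})\to H^k(X,\mathbb{Q})$ is an isomorphism for $k<n$. Every class on $\mathbb{P}^{n+1}$ is a power of the hyperplane class, hence algebraic, so all Hodge classes in $H^k(X)$ with $k<n$ are algebraic; the corollary to Theorem \ref{hl} then propagates this to all $k>n$. Thus the only classes left to examine lie in the middle group $H^n(X,\mathbb{Q})$. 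For degree $1$ this already finishes the argument: $X$ is a hyperplane, i.e. $X\cong\mathbb{P}^n$, whose cohomology is generated by powers of the hyperplane class and so is entirely algebraic.

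For degree $2$, let $Q=X$ be a smooth quadric. First I would note that $Q$ is a rational homogeneous variety for the orthogonal group and therefore carries a Bruhat cell decomposition; exactly as in the proof of the proposition on flag varieties above, the resulting Schubert classes form a basis of $H^*(Q,\mathbb{Z})$ and are algebraic by construction. Concretely, the odd cohomology vanishes and $H^{2i}(Q,\mathbb{Q})$ is one-dimensional, spanned by $h^i$, for every $i\neq n/2$; in particular, when $n$ is odd the middle group $H^n(Q)$ is zero and we are done.

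The one genuine case, and the main obstacle, is the middle cohomology when $n=2m$ is even. Here $H^{2m}(Q,\mathbb{Q})$ has rank $2$, and the key geometric fact is that it is spanned by classes of linear subspaces: a smooth even-dimensional quadric contains two distinct rulings by $m$-planes $\mathbb{P}^m\subset Q$, whose cohomology classes $A,B$ satisfy $h^m=A+B$ and are linearly independent. I would verify the existence and independence of these $m$-planes by diagonalizing the quadratic form. Because the cell structure forces every class of $H^{2m}(Q)$ to be of Hodge type $(m,m)$, every Hodge class in the middle degree is an algebraic combination of $A$ and $B$, and the conjecture follows. The substantive content is therefore entirely classical: the existence of the two families of maximal linear subspaces on a quadric.
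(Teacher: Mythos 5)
Your proof is correct and follows essentially the same route as the paper: reduce to the middle cohomology via the Lefschetz hyperplane theorem, handle degree $1$ as $\mathbb{P}^n$, and handle quadrics via their cell (Schubert) decomposition into algebraic cycles. Your explicit identification of the two rulings of $m$-planes spanning the rank-two middle cohomology of an even-dimensional quadric is a welcome concretization, but it is the same underlying argument the paper cites from the cellular structure of quadrics.
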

\begin{proof}
Recall that a variety has degree 1 if and only if it's linear, hence its cohomology is the same as $\mathbb{P}^{n}$. In the degree 2 case, we have that the cohomology ring has a cellular decomposition and is again generated by Schubert cycles \cite{gh}, hence Hodge classes are algebraic in this setting as well.
\end{proof}
\begin{remark}
This proposition summarizes all known cases of the Hodge conjecture for hypersurfaces based on degree only, which don't depend on the dimension $n$. 
\end{remark}
We now discuss the case of fourfolds hypersurfaces.
\begin{lemma}\label{proj_bundle}
Let $p:P(E)\to X$ be a projective bundle. If the Hodge conjecture is true for $X$ then it's also true for $P(E)$. In particular, it's also true for Flag Bundles.
\end{lemma}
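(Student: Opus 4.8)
The plan is to exploit the projective bundle formula, which describes $H^*(P(E),\mathbb{Q})$ explicitly as a module over $H^*(X,\mathbb{Q})$, and then to check that this description is compatible both with the Hodge decomposition and with algebraic cycles. Let $E$ have rank $r+1$, so the fibers of $p$ are copies of $\mathbb{P}^r$, and let $\xi=c_1(\mathcal{O}_{P(E)}(1))\in H^2(P(E),\mathbb{Z})$ denote the relative hyperplane class. Since $1,\xi,\dots,\xi^r$ restrict to a basis of the cohomology of each fiber, the Leray--Hirsch theorem gives, for every class $\beta\in H^*(P(E),\mathbb{Q})$, a unique expression
$$ \beta=\sum_{i=0}^{r} p^*(\alpha_i)\cup \xi^i,\qquad \alpha_i\in H^{*-2i}(X,\mathbb{Q}). $$

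First I would record the two compatibility statements that drive the argument. The class $\xi$ is the first Chern class of a line bundle, so by Theorem \ref{lef11} it is algebraic and of Hodge type $(1,1)$; hence $\xi^i$ is algebraic of type $(i,i)$. Moreover $p$ is a morphism of smooth projective varieties, so $p^*$ is a morphism of Hodge structures and carries the class of a cycle $Z$ on $X$ to the class of $p^{*}Z$ on $P(E)$. Together these show that the Leray--Hirsch decomposition is in fact an isomorphism of Hodge structures, the $i$-th summand being $H^{*-2i}(X)$ shifted in bidegree by $(i,i)$ via multiplication by $\xi^i$.

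Now take a Hodge class $\beta\in H^{p,p}(P(E))\cap H^{2p}(P(E),\mathbb{Q})$ and write it as above with $\alpha_i\in H^{2(p-i)}(X,\mathbb{Q})$. Because the decomposition respects the Hodge bigrading and each $\xi^i$ is of type $(i,i)$, the hypothesis that $\beta$ is of type $(p,p)$ forces every $\alpha_i$ to be of type $(p-i,p-i)$; that is, each $\alpha_i$ is a rational Hodge class on $X$. Applying the Hodge conjecture for $X$, we may write $\alpha_i=cl_{\otimes\mathbb{Q}}(Z_i)$ for some $Z_i\in CH^{p-i}(X)\otimes\mathbb{Q}$. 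Then
$$ \beta=\sum_{i=0}^{r} p^*\!\big(cl_{\otimes\mathbb{Q}}(Z_i)\big)\cup \xi^i=\sum_{i=0}^{r} cl_{\otimes\mathbb{Q}}\!\big(p^{*}Z_i\cdot H^i\big), $$
where $H$ is a divisor on $P(E)$ with class $\xi$, exhibiting $\beta$ as the class of an algebraic cycle and proving the Hodge conjecture for $P(E)$.

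The step I expect to require the most care is the compatibility of the Leray--Hirsch decomposition with the Hodge structures, i.e. justifying that the Hodge type of each $\alpha_i$ can be read off from that of $\beta$; once this is in place, the rest is formal. Finally, for the statement on flag bundles, I would observe that the flag bundle associated to $E$ is built as an iterated tower of projective bundles, so applying the projective bundle case finitely many times and inducting on the length of the tower yields the Hodge conjecture for flag bundles.
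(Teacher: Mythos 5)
Your proof is correct and takes essentially the same route as the paper: the projective bundle formula expressing $H^*(P(E),\mathbb{Q})$ as a free module over $H^*(X,\mathbb{Q})$ on powers of $\xi=c_1(\mathcal{O}_{P(E)}(1))$, combined with the algebraicity of $\xi$ and the Hodge conjecture on $X$. You merely fill in the Hodge-type compatibility and the flag-bundle induction that the paper's two-line proof leaves implicit.
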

\begin{proof}
The cohomology ring $H^*(P(E))$ is an algebra over $H^*(X)$ generated by $c_1(\mathcal{O}_{P(E)}(1))$, with coefficients in the Chern classes of $E$. By construction, Chern classes come from algebraic cycles and the result follows.
\end{proof}
\begin{lemma}\label{fin_d}
Let $f:X\to Y$ be a morphism of degree $d>0$ between smooth complex algebraic varieties. If the Hodge conjecture holds for $X$ then it holds for $Y$.
\end{lemma}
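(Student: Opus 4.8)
The plan is to exploit the Gysin pushforward together with the projection formula, which together let us recover $\alpha$ on $Y$, up to the factor $d$, from its pullback on $X$. Since $f$ has degree $d>0$ it is generically finite, so $\dim X=\dim Y=:n$, and being a morphism of smooth projective varieties it is proper. First I would record the two maps needed: the pullback $f^{*}\colon H^{2p}(Y,\mathbb{Q})\to H^{2p}(X,\mathbb{Q})$ and the Gysin map $f_{*}\colon H^{2p}(X,\mathbb{Q})\to H^{2p}(Y,\mathbb{Q})$. Because $\dim X=\dim Y$ the Gysin map preserves degree, and both $f^{*}$ and $f_{*}$ are morphisms of Hodge structures of type $(0,0)$; in particular they send Hodge classes to Hodge classes.

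Next, given a Hodge class $\alpha\in H^{p,p}(Y)\cap H^{2p}(Y,\mathbb{Q})$, I would form $f^{*}\alpha$, which is again a Hodge class, now on $X$. By hypothesis the Hodge conjecture holds for $X$, so there is a cycle $Z\in CH^{p}(X)\otimes\mathbb{Q}$ with $cl(Z)=f^{*}\alpha$. Pushing forward and using that the Gysin map is compatible with the cycle class map, one has $f_{*}cl(Z)=cl(f_{*}Z)$, where $f_{*}Z\in CH^{p}(Y)\otimes\mathbb{Q}$ since the proper pushforward of cycles preserves codimension when $\dim X=\dim Y$. The projection formula then gives $f_{*}f^{*}\alpha=(\deg f)\,\alpha=d\,\alpha$. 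Combining these, $d\,\alpha=cl(f_{*}Z)$, and since we work with rational coefficients we may divide by $d$ to conclude that $\alpha=cl\!\left(\tfrac{1}{d}f_{*}Z\right)$ is algebraic. As $\alpha$ was an arbitrary Hodge class on $Y$, the Hodge conjecture holds for $Y$.

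The only real content is the third step: that the Gysin pushforward on cohomology matches the proper pushforward of algebraic cycles under the cycle class map, and that $f_{*}f^{*}$ is multiplication by $d$. Both are standard — the projection formula reads $f_{*}(f^{*}\beta\cup\gamma)=\beta\cup f_{*}\gamma$, and taking $\gamma=1$ and using $f_{*}1_{X}=(\deg f)\,1_{Y}$ yields $f_{*}f^{*}=\deg f$ — but they are where all the geometry sits, so I would want to invoke them carefully rather than reprove them. Everything else is formal manipulation in Hodge theory with $\mathbb{Q}$-coefficients, and the divisibility by $d$ that would obstruct an integral statement is harmless here.
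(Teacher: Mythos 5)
Your proposal is correct and follows essentially the same argument as the paper: pull back the Hodge class to $X$, apply the hypothesis there to get an algebraic cycle, push forward, and use $f_*f^* = d$ to recover the class up to the rational factor $\tfrac{1}{d}$. The paper's proof is just a terser version of yours, leaving the compatibility of the Gysin map with the cycle class map and the projection formula implicit.
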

\begin{proof}
Let $a\in H^{p,p}(Y,\mathbb{Q})$ be a Hodge class, then the pullback $f^*(a)$ is a Hodge class in $X$, so if the Hodge conjecture is true, $f^*(a)=[Z]$ for some algebraic cycle $Z$. Then $da=f_*f^*(a)=f_*[Z]$, so $a=\frac{1}{d}f_*[Z]$.
\end{proof}
\begin{theorem}(Zucker \cite{zucker})
The Hodge conjecture is true for cubic fourfolds.
\end{theorem}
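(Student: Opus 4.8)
The plan is to reduce the statement to the algebraicity of the primitive middle-dimensional Hodge classes and then to realize those classes through normal functions, in the spirit of Lefschetz's original argument for surfaces. Let $X\subset\mathbb{P}^5$ be a smooth cubic fourfold, so $\dim X=4$. By the Lefschetz hyperplane theorem (Theorem \ref{lh}) together with Hard Lefschetz (Theorem \ref{hl}), the cohomology $H^k(X,\mathbb{Q})$ for $k\neq 4$ agrees with that of $\mathbb{P}^5$ and is therefore generated by powers of the hyperplane class, hence algebraic. The only classes that remain are the Hodge classes in $H^{2,2}(X)\cap H^4(X,\mathbb{Q})$, and since the square of the hyperplane class accounts for the non-primitive part, I would reduce to the primitive subspace $H^{2,2}(X)_{prim}\cap H^4(X,\mathbb{Q})$. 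The decisive special feature of the cubic fourfold is that its Hodge numbers in degree four are $(h^{3,1},h^{2,2},h^{1,3})=(1,21,1)$; in particular $h^{4,0}=0$ and $h^{3,1}=1$, so the transcendental part of the Hodge structure on $H^4(X)$ is as small as possible. This is exactly the feature that will make Jacobi inversion accessible.

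Next I would set up the normal function machinery adapted to this situation. Choosing a Lefschetz pencil of hyperplane sections of $X$ produces a family $\{X_t\}_{t\in\mathbb{P}^1}$ whose smooth members are cubic threefolds, and after blowing up the base locus one obtains a morphism $\pi:\tilde{X}\to\mathbb{P}^1$. The relevant intermediate Jacobian bundle $\mathcal{J}$ has fibers the intermediate Jacobians $J(X_t)=H^3(X_t,\mathbb{C})/(F^2H^3(X_t)+H^3(X_t,\mathbb{Z}))$, which by Clemens--Griffiths are principally polarized abelian varieties of dimension five. Exactly as in the sequences \eqref{jacobian_e} and \eqref{classm}, a primitive Hodge class $\zeta\in H^{2,2}(X)_{prim}\cap H^4(X,\mathbb{Q})$ should determine a normal function $\nu_\zeta$, namely a section of $\mathcal{J}$, whose topological invariant $[\nu_\zeta]$ recovers $\zeta$ under the identification of $H^1(\mathbb{P}^1,R^3\pi_*\mathbb{Z})$ with the primitive part of $H^4(X,\mathbb{Z})$. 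Griffiths' theory of normal functions guarantees that the classes arising this way are precisely the primitive Hodge classes, so at this stage every $\zeta$ of interest is represented by a normal function.

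The crux, and the step I expect to be the main obstacle, is Jacobi inversion: one must show that $\nu_\zeta=\nu_Z$ for a genuine algebraic cycle $Z\in CH^2(\tilde{X})$, i.e. that the normal function comes from geometry rather than merely from Hodge theory. As emphasized in Section \ref{sec1}, this is exactly the step that fails for a general variety. For the cubic fourfold it can be restored using the special structure of the fibers: by Clemens--Griffiths the intermediate Jacobian $J(X_t)$ is the Albanese of the Fano surface of lines on the cubic threefold $X_t$, and the Abel--Jacobi images of differences of lines (together with their degenerations over the nodal fibers) generate $J(X_t)$. Because $h^{3,1}(X)=1$, the vertical variation of $\nu_\zeta$ is governed by essentially a single period, so the inversion problem becomes a one-parameter Abel--Jacobi inversion directly analogous to Poincar\'e's classical case rather than the higher-rank problem that obstructs the general conjecture. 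Producing the cycle $Z$ from this inversion and invoking $[\nu_Z]=\zeta$ then exhibits $\zeta$ as an algebraic class, and pushing forward along the birational contraction $\tilde{X}\to X$ yields the desired cycle on $X$, which completes the argument.
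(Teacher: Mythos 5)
Your strategy is not the one the paper uses, but it is a genuine strategy: it is essentially Zucker's \emph{original} proof via normal functions, which the paper explicitly acknowledges in the remark following the theorem as ``another, more technical, proof.'' The paper itself argues much more cheaply: it takes the relative Fano variety of lines $F\subset\mathbb{P}^1\times G(2,5)$ of a Lefschetz pencil, which is a \emph{threefold}, the tautological $\mathbb{P}^1$-bundle $E\to F$, and the degree-$6$ projection $\mu:E\to X$, and then concludes by combining Theorem \ref{threefolds} (Hodge conjecture in dimension $\leq 3$), Lemma \ref{proj_bundle} (projective bundles), and Lemma \ref{fin_d} (finite-degree morphisms). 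No normal functions, no inversion theorem. So the divergence from the paper is not itself a defect; the issue is whether your sketch closes.

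It does not, because its decisive step is asserted rather than proved, and the justification you offer is not the right one. First, the claim that ``Griffiths' theory of normal functions guarantees'' that every primitive Hodge class in $H^4(X,\mathbb{Q})$ is the class of a normal function is not a formal consequence of Griffiths' work: for the weight-$3$ variation attached to a pencil of cubic threefolds this is the theorem on normal functions in higher weight, which is itself a nontrivial theorem of Zucker and part of the machinery behind \cite{zucker}. Second, and more seriously, Jacobi inversion does not follow from the facts you invoke. Clemens--Griffiths \cite{gc} gives \emph{pointwise} generation of $J(X_t)$ by Abel--Jacobi images of differences of lines; inversion of a normal function requires an \emph{algebraic family} of $1$-cycles $Z_t$ on the fibers, varying algebraically in $t$ and controlled at the nodal fibers, with $AJ(Z_t)=\nu_\zeta(t)$ for all $t$. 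Passing from pointwise surjectivity to such a family is exactly the technical content of Zucker's proof: one must lift the section $\nu_\zeta$ through the map from a relative cycle space (e.g.\ relative symmetric products of the Fano surface over $\mathbb{P}^1$) to the Jacobian bundle, and there is no general principle that makes such a lift exist --- indeed the paper stresses in Section \ref{sec1}, citing \cite{shioda1}, that Jacobi inversion genuinely fails in codimension greater than one, so any appeal to it carries the full burden of proof. Finally, your heuristic that $h^{3,1}(X)=1$ makes the variation ``governed by a single period'' and hence reduces matters to a ``one-parameter'' inversion is off target: what makes inversion tractable here is a property of the \emph{fibers}, namely $h^{3,0}(X_t)=0$ (so each $J(X_t)$ is an abelian variety and the horizontality constraint on normal functions is vacuous) together with the algebraic representability of $J(X_t)$ by the Fano surface; the Hodge number $h^{3,1}$ of the total space plays no role in the fiberwise inversion problem. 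To repair the argument you would either have to carry out Zucker's lifting argument in full, or abandon inversion altogether and switch to the paper's reduction via $F$, $E$, and Lemmas \ref{proj_bundle} and \ref{fin_d}.
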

\begin{proof}
The idea of the Proof is to use the relative Fano variety of lines, which in this case is a threefold, and use proposition \ref{threefolds} and the lemmas above.

Let $\{X_t\}_{t\in\mathbb{P}^1}$ be a Lefschetz pencil. A general member of this pencil is a smooth cubic threefold. Cubic threefolds were extensively studied in \cite{gc}. Consider the relative Fano variety of lines
$$F:=\{ (t,l)\in \mathbb{P}^1\times G(2,5) \;|\; l\subseteq X_t \}$$
and the tautological projective line bundle over it
$$E:=\{ (x,t,l)\in X\times F \;|\; x\subseteq l \subseteq X_t \}$$
so that we have natural projections map $\pi:E\to F$ and $\mu:E\to X$. After blowing up the base locus, if necessary, we can assume $F$ smooth. By \cite{gc}, $\mu$ has degree $6$ and we also have $\dim F=3$ and $E$ a projective bundle. The result then follows from the two lemmas above.
\end{proof}
\begin{remark}
Zucker also gave another, more technical, proof using normal functions. The disadvantage of the proof above, according to him, is that it can't be generalized, since it used the fact that the dimension of $F$ is $3$ and for higher degree or dimension the dimension should be higher.
\end{remark}
\begin{remark}
That is not to say that his proof using normal functions can be generalized either, as we noted before, Jacobi inversion can fail in codimension greater than one \cite{shioda1}. So a different method must be used in order to arrive at a generalization of Poincaré existence theorem. 
\end{remark}
We now discuss the unirational case, but first a lemma (see \cite[chapter 13]{lewis} for a proof).
\begin{lemma}
Let $D\subseteq X$ be a smooth irreducible subvariety of codimension $r\geq 2$ and $B_D(X)$ the blow-up of $X$ along $D$ with exceptional divisor $E$ and morphism $f:B_D(X)\to X$. Then
$$H^k(B_D(X),\mathbb{Q})=H^k(X,\mathbb{Q})\oplus H^{k-2}(D,\mathbb{Q})\oplus \ldots \oplus H^{k-2r+2}(D,\mathbb{Q})$$
\end{lemma}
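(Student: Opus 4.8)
The plan is to write down an explicit map and prove it is an isomorphism of (rational) Hodge structures. Write $B := B_D(X)$, let $i:D\hookrightarrow X$ and $j:E\hookrightarrow B$ be the inclusions, and let $g:=f|_E:E\to D$. The first input is the geometry of the exceptional divisor: $E=\mathbb{P}(N_{D/X})$ is a $\mathbb{P}^{r-1}$-bundle over $D$ via $g$, so setting $h:=c_1(\mathcal{O}_E(1))$, the projective bundle formula (as in the proof of Lemma \ref{proj_bundle}) gives $H^m(E,\mathbb{Q})=\bigoplus_{s=0}^{r-1} h^s\cdot g^*H^{m-2s}(D,\mathbb{Q})$. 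I would then define
\[
\Phi(\alpha,\beta_1,\dots,\beta_{r-1}) \;=\; f^*\alpha \;+\; \sum_{s=1}^{r-1} j_*\!\big(h^{s-1}\cup g^*\beta_s\big),
\]
a map $H^k(X,\mathbb{Q})\oplus\bigoplus_{s=1}^{r-1}H^{k-2s}(D,\mathbb{Q})\to H^k(B,\mathbb{Q})$, where $j_*$ is the Gysin pushforward (degree $+2$); a degree count shows each summand lands in $H^k$.

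For injectivity I would use three identities: $f_*f^*=\mathrm{id}$ (since $f$ has degree one, cf. Lemma \ref{fin_d}); the self-intersection formula $j^*j_*\xi=-h\cup\xi$ coming from $N_{E/B}=\mathcal{O}_E(-1)$; and $f_*j_*=i_*g_*$ together with $g_*h^{s-1}=0$ for $s\le r-1$. Applying $f_*$ to a relation $\Phi(\alpha,\beta_\bullet)=0$ annihilates every Gysin term (each has $h$-exponent $\le r-2$) and returns $\alpha=0$; applying $j^*$ to what remains and invoking the projective bundle formula forces every $\beta_s=0$.

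Surjectivity is the main obstacle. Here I would compare the two localization (Gysin) long exact sequences attached to the open set $U:=B\setminus E\cong X\setminus D$: the one for the codimension-$r$ pair $(X,D)$ and the one for the codimension-one pair $(B,E)$, linked by $f^*$ and the identity on $H^*(U)$. The delicate point is a Thom-class compatibility: the two connecting homomorphisms differ only by the inclusion $H^{k-2r+1}(D)\hookrightarrow H^{k-1}(E)$, $\alpha\mapsto h^{r-1}\cup g^*\alpha$. Granting this, a class $\gamma\in H^k(B)$ restricts to $U$, lifts to some $\alpha\in H^k(X)$, and $\gamma-f^*\alpha$ then vanishes on $U$, hence equals $j_*\xi$ for some $\xi\in H^{k-2}(E)$. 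Expanding $\xi$ by the projective bundle formula yields terms $j_*(h^s\cup g^*\mu_s)$ with $0\le s\le r-1$; those with $s\le r-2$ already lie in $\mathrm{im}\,\Phi$, and the top term $s=r-1$ is removed via the excess-intersection formula for the (non-transverse) blow-up square, $f^*i_*\mu=j_*\big(c_{r-1}(\mathcal{Q})\cup g^*\mu\big)$ with $\mathcal{Q}=g^*N_{D/X}/\mathcal{O}_E(-1)$, whose leading term is exactly $j_*(h^{r-1}\cup g^*\mu)$; this trades the top term for $f^*i_*\mu$ modulo lower terms, placing $\gamma$ in $\mathrm{im}\,\Phi$.

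The two places demanding real care are precisely in this last paragraph: the sign-correct matching of the Gysin connecting maps through the Thom isomorphisms, and the excess-intersection identity for the blow-up square. Everything else is formal. Finally, since $f^*,g^*,j_*$ and cup product with the algebraic class $h$ are all morphisms of Hodge structures of the relevant bidegrees, $\Phi$ is an isomorphism of rational Hodge structures — which is what is needed to transport the Hodge conjecture across blow-ups.
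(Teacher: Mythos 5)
Your argument is correct, but note that the paper itself gives no proof of this lemma at all---it simply defers to Lewis \cite[Chapter 13]{lewis}---so the comparison is against the standard literature rather than an internal argument. What you wrote is essentially the classical textbook proof (as in Voisin or Griffiths--Harris): the explicit map $\Phi$; injectivity via the three identities $f_*f^*=\mathrm{id}$, $j^*j_*\xi=-h\cup\xi$, and $f_*j_*=i_*g_*$ together with $g_*h^{s-1}=0$ for $s-1\le r-2$; and surjectivity via localization plus the excess-intersection identity $f^*i_*\mu=j_*(c_{r-1}(\mathcal{Q})\cup g^*\mu)$. All of these steps are sound, and your closing observation that $\Phi$ is an isomorphism of Hodge structures is exactly what the paper needs to deduce Lemma \ref{hc_blow}. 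Two refinements are worth recording. First, your ``Thom-class compatibility'' is stated imprecisely: the map $H^{k-2r+1}(D)\to H^{k-1}(E)$ induced by $f^*$ on relative cohomology, read through the two Thom isomorphisms, is $\mu\mapsto c_{r-1}(\mathcal{Q})\cup g^*\mu$ and not just its leading term $h^{r-1}\cup g^*\mu$; your argument only uses injectivity of this map, which does follow by projecting onto the $h^{r-1}$-summand of the projective bundle decomposition, so nothing breaks, but the map as you wrote it is not literally the comparison map. Second, you can bypass this delicate point entirely: given $\gamma\in H^k(B)$, set $\alpha:=f_*\gamma$; since proper pushforward commutes with restriction to the open set $U$ over which $f$ is an isomorphism, one gets $(\gamma-f^*f_*\gamma)|_U=0$, hence $\gamma-f^*f_*\gamma=j_*\xi$ directly from the Gysin sequence of the pair $(B,E)$, and your excess-intersection step then completes surjectivity. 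This removes the only place in your write-up where the sign and Thom-class bookkeeping genuinely requires care, and it reuses the degree-one identity $f_*f^*=\mathrm{id}$ you already invoked for injectivity.
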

Notice that in particular if the Hodge conjecture holds for $X$ and $D$ then it holds for $B_D(X)$, hence the following is immediate:
\begin{lemma}\label{hc_blow}
If $D\subset X$ is smooth irreducible subvariety of codimension $r\geq 2$ and $\dim D\leq 3$, then if the Hodge conjecture holds for $X$, it also holds for $B_D(X)$.
\end{lemma}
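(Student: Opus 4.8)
The plan is to reduce the statement to the consequence already noted after the preceding lemma, using the dimension hypothesis only to discharge the assumption on $D$. First I would observe that since $\dim D \le 3$, the Hodge conjecture holds for $D$ automatically by Theorem \ref{threefolds}; this is the sole role of the dimension bound. Since by hypothesis the Hodge conjecture also holds for $X$, it remains to justify—making the remark preceding the statement precise—that validity of the conjecture for both $X$ and $D$ forces it for $B_D(X)$.

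To that end I would exhibit the decomposition of the previous lemma as an isomorphism of Hodge structures realized by algebraic correspondences. Writing $E = \mathbb{P}(N_{D/X})$ for the exceptional divisor, with projection $p : E \to D$, inclusion $j : E \hookrightarrow B_D(X)$, and tautological class $h = c_1(\mathcal{O}_E(1)) \in H^2(E,\mathbb{Q})$, the summand $H^{k-2i}(D,\mathbb{Q})$ (for $1 \le i \le r-1$) embeds into $H^k(B_D(X),\mathbb{Q})$ via $\alpha \mapsto j_*\big(h^{i-1}\cup p^*\alpha\big)$, while $H^k(X,\mathbb{Q})$ embeds via the pullback $f^*$. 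Each of these maps is induced by an algebraic correspondence, being built from $f^*$, the proper pushforward $j_*$, the flat pullback $p^*$, and cup product with the algebraic class $h$; moreover each raises Hodge type by the Tate twist $(i,i)$, so that Hodge classes correspond to Hodge classes on both sides.

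Given a Hodge class $\beta \in H^{p,p}(B_D(X)) \cap H^{2p}(B_D(X),\mathbb{Q})$, I would then decompose it through this isomorphism into a Hodge class on $X$ of degree $2p$ together with Hodge classes on $D$ of degrees $2p-2i$, hence of type $(p-i,p-i)$. By the Hodge conjecture for $X$ and for $D$ each component is the class of an algebraic cycle, and applying the algebraic correspondences $f^*$ and $j_*\big(h^{i-1}\cup p^*(\,\cdot\,)\big)$ carries these cycles to algebraic cycles on $B_D(X)$ whose classes sum to $\beta$. This shows the cycle class map for $B_D(X)$ is surjective onto Hodge classes.

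The argument is essentially formal, so I do not expect a serious obstacle; the only point needing genuine care is the verification that the summand inclusions of the preceding lemma are honestly induced by algebraic correspondences and carry the stated Tate twists, so that the passage between Hodge classes on $B_D(X)$ and those on $X$ and $D$ is faithful. Once that compatibility is secured, the hypothesis $\dim D \le 3$ does the remaining work entirely through Theorem \ref{threefolds}.
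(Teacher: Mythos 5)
Your proposal is correct and follows essentially the same route as the paper, which deduces the lemma as an immediate consequence of the preceding blow-up decomposition formula together with Theorem \ref{threefolds} applied to $D$ (the sole role of the hypothesis $\dim D\leq 3$, exactly as you say). The verification you flag as needing care---that the summand inclusions $f^*$ and $\alpha\mapsto j_*\bigl(h^{i-1}\cup p^*\alpha\bigr)$ are algebraic correspondences compatible with the Tate twists---is precisely the detail the paper leaves implicit in calling the conclusion ``immediate.''
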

We are ready to prove the next case.
\begin{theorem}(Murre \cite{murre})\label{murre}
The Hodge conjecture is true for unirational fourfolds.
\end{theorem}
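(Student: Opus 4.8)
The plan is to exhibit $X$ as the image of a generically finite morphism from a smooth projective fourfold on which the Hodge conjecture is already known, and then to descend the statement along that morphism by means of Lemma~\ref{fin_d}. Since $X$ is a unirational fourfold, by definition there is a dominant, generically finite rational map from $\mathbb{P}^4$ to $X$ of some degree $d>0$. The first step is to eliminate its indeterminacy: by Hironaka's theorem there exist a smooth projective fourfold $\tilde{P}$ and a morphism $\sigma\colon \tilde{P}\to\mathbb{P}^4$, realized as a finite composition of blow-ups along smooth centers, such that the induced map $g\colon \tilde{P}\to X$ is an everywhere-defined morphism. Because the original map was dominant and generically finite, $g$ is a surjective morphism of degree $d$ between smooth projective fourfolds.

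The second step is to prove the Hodge conjecture for $\tilde{P}$ by induction along the tower $\tilde{P}=P_N\to P_{N-1}\to\cdots\to P_0=\mathbb{P}^4$ produced by the resolution. The base case is clear, since the Hodge conjecture holds trivially for $\mathbb{P}^4$. For the inductive step, $P_{i+1}$ is the blow-up of the smooth fourfold $P_i$ along a smooth center $D_i$. Blow-ups along codimension-one centers are isomorphisms and may be ignored, and by treating connected components one at a time I may assume $D_i$ is smooth, irreducible, of codimension $r\geq 2$. As a subvariety of codimension at least two in a fourfold, $D_i$ has dimension at most two, so the Hodge conjecture holds for $D_i$ by Theorem~\ref{threefolds}; Lemma~\ref{hc_blow} then transports the Hodge conjecture from $P_i$ to $P_{i+1}$. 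Iterating through the tower gives the Hodge conjecture for $\tilde{P}$.

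The final step is to apply Lemma~\ref{fin_d} to the degree-$d$ morphism $g\colon\tilde{P}\to X$, which immediately yields the Hodge conjecture for $X$. I expect the genuine content of the argument to lie in the resolution step: one must invoke Hironaka's elimination of indeterminacy and keep careful control of the blow-up centers appearing in the tower. The decisive bookkeeping is that, in a fourfold, every center needed to resolve a rational map has codimension at least two and therefore dimension at most two, comfortably inside the range where Theorem~\ref{threefolds} supplies the Hodge conjecture for the center. This dimension count is exactly where the hypothesis $\dim X=4$ enters, and it is what keeps Lemma~\ref{hc_blow} applicable at every stage; in substantially higher dimension the centers may acquire dimension beyond three, where the Hodge conjecture for the center is itself unknown and the induction breaks down.
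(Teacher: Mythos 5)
Your proof is correct and follows essentially the same route as the paper's: resolve the rational map $\mathbb{P}^4\dashrightarrow X$ via Hironaka into a tower of blow-ups along smooth centers of dimension at most two (the paper's ``surfaces, lines and points''), propagate the Hodge conjecture up the tower with Lemma~\ref{hc_blow}, and descend to $X$ along the resulting generically finite morphism using Lemma~\ref{fin_d}. Your write-up is simply a more detailed version of the paper's three-line argument, making explicit the induction on the blow-up tower and the dimension count on the centers.
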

\begin{proof}
By hypothesis there is a dominant rational map $f:\mathbb{P}^4\to X$. By hironaka's theorem, we can find $Y$ obtained from $\mathbb{P}^4$ after a sequence of blow ups of surfaces, lines and points, such that there is a proper surjective morphism of finite degree $m:Y\to X$. The result follows from the previous lemma and lemma \ref{fin_d}.
\end{proof}
\begin{definition}
We say $X$ is uniruled if $X$ can be covered by lines. Equivalently, there's a dominant rational map $Y\times \mathbb{P}^1\to X$, where $\dim Y=\dim X -1$.
\end{definition}
Slightly adjusting the argument from the previous theorem we get
\begin{theorem}(Conte-Murre \cite{conte_murre})
The Hodge conjecture is true for uniruled fourfolds.
\end{theorem}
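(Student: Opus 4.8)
The plan is to mimic the proof of Theorem \ref{murre}, replacing the source $\mathbb{P}^4$ by the fourfold $Y\times\mathbb{P}^1$ furnished by the definition of uniruledness, and then to verify that the Hodge conjecture is already known for this new source. Since $X$ is a uniruled fourfold, there is a dominant rational map $f:Y\times\mathbb{P}^1\dashrightarrow X$ with $\dim Y=3$.

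First I would establish the Hodge conjecture for the source $Y\times\mathbb{P}^1$. As $Y$ is a smooth projective threefold, Theorem \ref{threefolds} gives the conjecture for $Y$. Now $Y\times\mathbb{P}^1$ is the projectivization $P(\mathcal{O}_Y\oplus\mathcal{O}_Y)$ of the trivial rank-two bundle on $Y$, hence a projective bundle over $Y$, so Lemma \ref{proj_bundle} yields the Hodge conjecture for $Y\times\mathbb{P}^1$. This is precisely the ``slight adjustment'' to Murre's argument: where the unirational case began with the trivially-known $\mathbb{P}^4$, here we begin with $Y\times\mathbb{P}^1$, whose Hodge conjecture we obtain from the threefold case plus the projective bundle lemma.

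Next, following the argument for Theorem \ref{murre}, I would resolve the indeterminacy of $f$. By Hironaka's theorem there is a smooth variety $W$ obtained from $Y\times\mathbb{P}^1$ by a finite sequence of blow-ups along smooth irreducible centers, together with a proper surjective morphism $m:W\to X$ of finite degree $d>0$. Since $W$ is four-dimensional, every center that affects cohomology has codimension $r\geq 2$ and therefore dimension at most $3$, so Lemma \ref{hc_blow} applies at each stage and the Hodge conjecture propagates from $Y\times\mathbb{P}^1$ up to $W$. Finally, $m$ is finite of positive degree, so Lemma \ref{fin_d} transfers the conjecture from $W$ to $X$, completing the proof.

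The main obstacle, and the only point requiring care beyond the unirational case, is the bookkeeping in the blow-up step: one must invoke Hironaka's resolution of indeterminacy in the form that presents $W$ as an iterated blow-up along smooth irreducible centers, so that Lemma \ref{hc_blow} applies verbatim at each successive stage. One should note that the dimension bound $\dim D\leq 3$ demanded by that lemma is automatic here, since the ambient variety is a fourfold and the relevant centers have codimension at least two; similarly the finiteness of $m$ follows because $f$ is dominant between varieties of equal dimension. With these checks in place the argument closes.
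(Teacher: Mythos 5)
Your proposal is correct and follows essentially the same route as the paper: take the dominant rational map from $Y\times\mathbb{P}^1$, resolve its indeterminacy by Hironaka so that Lemma \ref{hc_blow} applies to each blow-up, and finish with Lemma \ref{fin_d}. The only divergence is how you establish the Hodge conjecture for the source $Y\times\mathbb{P}^1$: the paper cites the K\"unneth formula respecting Hodge types, while you realize $Y\times\mathbb{P}^1$ as the projective bundle $P(\mathcal{O}_Y\oplus\mathcal{O}_Y)$ over $Y$ and apply Lemma \ref{proj_bundle} together with Theorem \ref{threefolds}; both justifications are valid.
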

\begin{proof}
By hypothesis, there is a dominant rational map $Y\times \mathbb{P}^1\to X$ where $\dim Y=3$. Note that the Hodge conjecture is true for $Y\times \mathbb{P}^1$. Indeed, the Kunneth's formula respects Hodge types. Now we repeat the argument of theorem \ref{murre}. By Hironaka's theorem we can find $W$ smooth and obtained by blowups, and proper surjective morphism of finite degree $m:W\to X$. The result follows from the lemma \ref{hc_blow} and lemma \ref{fin_d}.
\end{proof}
\begin{corollary}
The Hodge conjecture is true for quartic and quintic fourfolds.
\end{corollary}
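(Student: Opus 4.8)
The plan is to exhibit quartic and quintic fourfolds as uniruled varieties and then invoke the Conte--Murre theorem. A quartic (resp. quintic) fourfold is by definition a smooth hypersurface $X=X_d\subset\mathbb{P}^5$ of degree $d=4$ (resp. $d=5$), since a fourfold has dimension $n=4$ and lives inside $\mathbb{P}^{n+1}=\mathbb{P}^5$. Everything therefore reduces to checking that such hypersurfaces are uniruled, after which the Conte--Murre theorem applies directly.

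First I would compute the canonical bundle by adjunction, obtaining $K_X=(K_{\mathbb{P}^5}+X)\big|_X=\mathcal{O}_X(d-6)$. Since $d-6<0$ precisely when $d\leq 5$, both the quartic and the quintic fourfold have ample anticanonical bundle, i.e. they are Fano. By the theorem of Koll\'ar--Miyaoka--Mori (and Campana) that smooth Fano varieties are rationally connected, and since a rationally connected variety is in particular uniruled (it admits a dominant rational map $Y\times\mathbb{P}^1\to X$ with $\dim Y=\dim X-1$), I conclude that $X$ is uniruled.

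For the quartic one can argue more concretely with the Fano variety of lines $F(X)\subset G(2,6)$. Parametrizing lines and imposing that they lie on $X$ gives the expected dimension $\dim F(X)=2n-d-1$, and the universal family of lines dominates $X$ as soon as $\dim F(X)\geq n-1$, i.e. $d\leq n=4$. Thus a smooth quartic fourfold is literally covered by lines. The quintic, however, lies just outside this range ($d=n+1$), which is exactly why the elementary line count fails and one must instead appeal to the Fano/rational-connectedness input.

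I expect the only genuine subtlety to be the uniruledness of the quintic: the geometric heuristic ``covered by lines'' breaks down at $d=n+1$, so the honest ingredient is the nontrivial fact that Fano implies rationally connected, hence uniruled. Once that is granted the corollary is immediate, since by Conte--Murre a uniruled fourfold satisfies the Hodge conjecture, and therefore so do quartic and quintic fourfolds.
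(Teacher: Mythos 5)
Your proposal is correct, and its skeleton is exactly the paper's: establish that quartic and quintic fourfolds are uniruled, then apply the Conte--Murre theorem. Where you genuinely diverge is in how uniruledness is obtained. The paper's entire proof is a citation of Conte--Murre, whose original 1978 argument is explicit projective geometry (quartic fourfolds are swept out by lines, quintic fourfolds by conics); you instead derive uniruledness from adjunction, $K_X=\mathcal{O}_X(d-6)$, so that $d\leq 5$ makes $X$ Fano, and then invoke the Koll\'ar--Miyaoka--Mori/Campana theorem that smooth Fano varieties are rationally connected, hence uniruled. Your route is heavier (and anachronistic relative to 1978), but it buys uniformity and generality: it handles the quartic and quintic by one argument and in fact shows the Hodge conjecture for every smooth Fano fourfold, whereas the explicit covering constructions must be tailored degree by degree. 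Your line-count aside is also a worthwhile refinement of the paper's own definition of uniruledness as ``covered by lines'': that literal condition holds for the quartic ($d\leq n$) but fails for the generic quintic, where $\dim F(X)=2n-d-1=2$ and the lines sweep out only a threefold; what the quintic satisfies is the weaker condition of a dominant rational map $Y\times\mathbb{P}^1\dashrightarrow X$ with $\dim Y=3$, which is the hypothesis actually used in the paper's proof of the Conte--Murre theorem and is exactly what your Fano/rational-connectedness argument supplies.
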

\begin{proof}
Quartic and quintic fourfolds are uniruled \cite{conte_murre}.
\end{proof}
After more than 40 years, a complete proof for fourfolds of degree six and beyond have not been found, except for particular cases. In fact, some fourfolds are even candidates for counter-example to the Hodge conjecture, as we shall see soon.

We now discuss complete intersection fourfolds. The method of proof is very different from the ones presented so far. Instead of using lines or blowups, the idea consists of using a theorem by Bloch and Srinivas, which concerns the class of the diagonal in $X\times X$.
\begin{theorem}(Bloch-Srinivas \cite{bloch-s})
Let $Y$ be a smooth complex projective variety of dimension $n$ such that $CH_0(Y)$ is supported on a closed algebraic subset $W\subseteq Y$. Then if $\Delta\in CH^n(Y\times Y)$ is the class of the diagonal, there is a nonzero integer $N$, $Z_1,Z_2\in CH^n(Y\times Y)$, and a divisor $D$ such that:
$$N\Delta=Z_1+Z_2$$
and $Supp \;Z_1=D\times Y,\quad D\subsetneq Y,\quad Supp \;Z_2=Y\times W$
\end{theorem}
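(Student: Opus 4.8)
The plan is to use the \emph{decomposition of the diagonal} technique, whose engine is the restriction of algebraic cycles to the generic fibre of a projection. Write $K=\mathbb{C}(Y)$ for the function field of $Y$ and let $p_1,p_2:Y\times Y\to Y$ be the two projections. I would view $Y\times Y$ as a family over the first factor via $p_1$, so that its generic fibre is $Y_K:=Y\times_{\mathbb{C}}\operatorname{Spec}K$, the base change of the second factor to $K$. Under the restriction-to-generic-fibre map $CH^n(Y\times Y)\to CH_0(Y_K)$ the diagonal $\Delta$ maps to the class $\delta$ of the tautological $K$-point of $Y_K$ (the ``diagonal point''), and the entire argument is organized around controlling $\delta$. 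Since the hypothesis says $CH_0(Y)$ is supported on $W$, i.e. $CH_0(W)\to CH_0(Y)$ is surjective, the strategy is to transport this information to the generic fibre and then spread it back out.

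First I would record the colimit description $CH_0(Y_K)=\varinjlim_{\emptyset\neq U\subseteq Y}CH^n(U\times Y)$, where $U$ runs over the nonempty Zariski opens of the first factor; this follows from the localization exact sequence for Chow groups, which simultaneously shows that any cycle restricting to $0$ on $U\times Y$ is pushed forward from $(Y\setminus U)\times Y$. This bookkeeping lets me pass freely between statements about the generic fibre and honest cycles on $Y\times Y$. The crucial input is the claim that the hypothesis forces $N\delta=\gamma_K$ in $CH_0(Y_K)$ for some nonzero integer $N$ and some zero-cycle $\gamma_K$ supported on $W_K=W\times_{\mathbb{C}}\operatorname{Spec}K$.

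Granting this, the remainder is formal. I would \emph{spread out}: the relation $N\delta=\gamma_K$ already holds at a finite stage of the colimit, so over some nonempty open $U\subseteq Y$ one has $N\Delta|_{U\times Y}=\gamma|_{U\times Y}$ with $\gamma$ supported on $U\times W\subseteq Y\times W$. Shrinking $U$ if necessary, I may take its complement to be a divisor $D\subsetneq Y$: any proper closed subset of the projective variety $Y$ lies in a hypersurface, obtained from a section of a sufficiently positive twist $\mathcal{I}\otimes\mathcal{L}^m$ of its ideal sheaf. Setting $Z_2:=\overline{\gamma}$, a cycle supported on $Y\times W$ restricting to $\gamma$ over $U\times Y$, the difference $N\Delta-Z_2$ vanishes on $(Y\setminus D)\times Y$, hence by localization is pushed forward from $D\times Y$; calling it $Z_1$ gives $N\Delta=Z_1+Z_2$ with the required supports.

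The main obstacle is exactly the propagation step, namely showing that ``$CH_0$ supported on $W$'' over $\mathbb{C}$ implies the analogous statement over the algebraically closed extension $\overline{K}$, which then yields the integral relation over $K$ after multiplying by a suitable $N$ (the transfer for the extension $\overline{K}/K$ supplies this factor via multiplication by degree). This is not formal: it rests on a specialization argument for rational equivalence of zero-cycles. One realizes a $\overline{K}$-point of $Y$ as the generic point of a family parametrized by a $\mathbb{C}$-variety, applies the hypothesis fibrewise over the closed ($\mathbb{C}$-rational) points, where $CH_0$ is controlled, and then shows that the resulting rational equivalences specialize and patch into a single cycle $\gamma_K$ on $W_K$. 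Producing one coherent cycle rather than a merely fibrewise statement, while tracking the integer $N$ needed to clear denominators, is where all the genuine work lies; the spreading-out and localization steps are then routine consequences of the Chow-group machinery.
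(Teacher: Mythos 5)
The paper itself offers no proof of this theorem --- it is quoted directly from Bloch--Srinivas --- so your proposal can only be measured against the standard argument in the literature. Your formal skeleton matches that argument exactly and is correct: the colimit description $CH_0(Y_K)=\varinjlim_U CH^n(U\times Y)$, the identification of the restriction of $\Delta$ to the generic fibre with the tautological $K$-point $\delta$, the spreading-out of a relation $N\delta=\gamma_K$ with $\gamma_K$ supported on $W_K$, the localization sequence producing $Z_1$ supported on $(Y\setminus U)\times Y$, the enlargement of the complement $Y\setminus U$ to a divisor $D$, and the appearance of the integer $N$ via the norm for a finite extension of $K$ are all precisely how Bloch--Srinivas (and Voisin's exposition of them) proceed.

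The gap is in the one step you yourself flag as the crux, and the mechanism you sketch there would not work as described. You propose to realize the relevant $\overline{K}$-point as the generic point of a family over a $\mathbb{C}$-variety $B$ (for the diagonal point, $B=Y$ and the family is $\Delta$ itself), apply the hypothesis at the closed points $b\in B(\mathbb{C})$, and then let the fibrewise rational equivalences ``specialize and patch'' into an equivalence at the generic point. But specialization of rational equivalence goes in the opposite direction: from the generic fibre to the closed fibres, not back. Knowing that each $[y_b]$ is, up to some integer, supported on $W$ gives uncountably many unrelated equivalences, with no uniform bound on the integer or on their complexity, and no mechanism for patching them. The two standard repairs are: (i) the countability argument --- rational equivalences of bounded degree are parametrized by countably many Chow (or Hilbert) varieties, so the locus of $b$ admitting one is a countable union of closed subsets of $B$; since these cover $B(\mathbb{C})$ and $\mathbb{C}$ is uncountable, one of them must equal $B$, which yields an equivalence in family, with uniform $N$, restricting to the generic point; or (ii) the universal-domain trick of Bloch--Srinivas: $\overline{K}$ is algebraically closed of characteristic zero and of transcendence degree $2^{\aleph_0}$ over $\mathbb{Q}$, hence abstractly isomorphic to $\mathbb{C}$, and transporting the hypothesis through such an isomorphism (it is stable under conjugating the complex structure, since Chow groups and supports are invariants of the abstract scheme) applies it directly to the diagonal point of $Y_{\overline{K}}$, with no fibrewise argument at all. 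Without one of these inputs your key claim remains unproven, and, as you say yourself, everything else in the proposal is formal bookkeeping around it.
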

An important corollary from the theorem above is the following:
\begin{corollary}(Bloch-Srinivas \cite{bloch-s})
Let $X$ be a smooth complex projective variety such that $CH_0(X)$ is supported on a closed algebraic subset $X'\subseteq X$ with $\dim X'\leq 3$. Then the Hodge conjecture is true for $(2,2)$-classes on $X$.
\end{corollary}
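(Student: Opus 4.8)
The plan is to convert the decomposition of the diagonal provided by the theorem above into an identity between correspondences acting on $H^4(X,\mathbb{Q})$, and then to show that each piece produces an algebraic class. Recall that an algebraic cycle $\Gamma\in CH^n(X\times X)$ acts on cohomology by $\Gamma_*(\alpha)=p_{1*}\bigl(p_2^*\alpha\cdot[\Gamma]\bigr)$, that this action is a morphism of Hodge structures, and that the diagonal $\Delta$ acts as the identity. So, fixing a Hodge class $\alpha\in H^{2,2}(X)\cap H^4(X,\mathbb{Q})$ and applying the theorem with $W=X'$, the relation $N\Delta=Z_1+Z_2$ gives $N\alpha=(Z_1)_*\alpha+(Z_2)_*\alpha$. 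Since we are working over $\mathbb{Q}$ and $N\neq 0$, it suffices to show that each of $(Z_1)_*\alpha$ and $(Z_2)_*\alpha$ is algebraic.

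The main work is to factor these two actions through the supports $D\times X$ and $X\times X'$, keeping careful track of cohomological degrees. First I would choose resolutions $j:\tilde D\to D\hookrightarrow X$ and $h:\tilde{X'}\to X'\hookrightarrow X$, with $\dim\tilde D=n-1$ and $\dim\tilde{X'}\le 3$, and lift the classes $[Z_1],[Z_2]$ to classes on $\tilde D\times X$ and $X\times\tilde{X'}$ (a class supported on $D\times X$ lies in the image of the Gysin map from the resolution, and similarly for $X'$). The projection formula then yields factorizations $(Z_1)_*\alpha=j_*\beta$ and $(Z_2)_*\alpha=U_*\bigl(h^*\alpha\bigr)$, where $U$ is the lifted correspondence on $X\times\tilde{X'}$. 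A degree count is the crux here: because $Z_1$ is supported over the divisor $D$ and the action pushes forward to that factor, the intermediate class $\beta$ lands in $H^2(\tilde D)$; because $Z_2$ is supported over $X'$ and the action pulls $\alpha$ back to that factor, the relevant class $h^*\alpha$ lands in $H^4(\tilde{X'})$. Both orientations are at our disposal since $\Delta$ is symmetric, so one may transpose the relation $N\Delta=Z_1+Z_2$ if a factor comes out on the wrong side.

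Granting the factorizations, each term is now algebraic by one of our two inputs. The class $\beta\in H^2(\tilde D)$ is a Hodge class, being the image of $\alpha$ under a morphism of Hodge structures, so by the Lefschetz $(1,1)$-theorem (Theorem \ref{lef11}) it is algebraic; pushing forward by $j_*$ preserves algebraicity, whence $(Z_1)_*\alpha$ is algebraic. The class $h^*\alpha\in H^4(\tilde{X'})$ is a Hodge class on a variety of dimension at most $3$, so it is algebraic by Theorem \ref{threefolds}; applying the correspondence $U_*$ again preserves algebraicity, whence $(Z_2)_*\alpha$ is algebraic. Adding the two and dividing by $N$ shows that $\alpha$ is algebraic, proving the Hodge conjecture for $(2,2)$-classes on $X$.

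The step I expect to be the genuine obstacle is precisely the degree bookkeeping in the second paragraph: the whole proof hinges on the divisor term being forced into $H^2$, where Lefschetz $(1,1)$ is available, and the $X'$ term into $H^4$ of a threefold, where Theorem \ref{threefolds} applies. A secondary technical nuisance is that $D$ and $X'$ are in general singular, so the correspondences must be transported to smooth resolutions and one must check that lifting cycles, pulling back, pushing forward, and intersecting do not disturb Hodge type — which is exactly the statement that all these operations are morphisms of Hodge structures.
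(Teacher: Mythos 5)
Your proposal is correct and follows essentially the same route as the paper: the same diagonal decomposition $N\Delta = Z_1 + Z_2$ acting as correspondences on $H^4(X,\mathbb{Q})$, the same lifting of $Z_1, Z_2$ to cycles on $\tilde{D}\times X$ and $X\times\tilde{X'}$, and the same degree bookkeeping forcing one term into $H^2$ of the divisor resolution (Lefschetz $(1,1)$) and the other into $H^4$ of a variety of dimension at most $3$ (Theorem \ref{threefolds}). The orientation issue you flag is handled in the paper exactly as you anticipate, with the correspondence supported on $D\times X$ pushing its output forward from $\tilde{D}$ and the one supported on $X\times X'$ pulling $\alpha$ back to $\tilde{X'}$ first.
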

\begin{proof}
By the theorem above, there exists a divisor $D$, an integer $N$, and cycles $Z_1,Z_2$ such that, after taking cohomology, we have:
$$N[\Delta] = [Z_1]+[Z_2] \text{ in } H^{2n}(X\times X,\mathbb{Z})$$
Let $k:\tilde{D}\to X,j:\tilde{X'}\to X$ be the inclusion of the desingularization of $D$ and $X'$, respectively. Each Kunneth components of $H^{2n}(X\times X,\mathbb{Z})$ can be seen as a morphism of Hodge structures, in particular taking the components of type $(4,2n-4)$ we have:
$$N[\Delta]^* = [Z_1]^*+[Z_2]^* \;: H^4(X,\mathbb{Z})\to H^4(X,\mathbb{Z})$$
Now let $Z_1'\subset \tilde{D}\times X$ be a codimension $n$ cycle such that 
$$(k,Id)_*(Z_1')=Z_1$$
Similarly, let $Z_2'\subset X\times \tilde{X'}$ be such that
$$(Id,j)_*(Z_2')=Z_2$$
then it follows that for any $\alpha\in H^4(X,\mathbb{Z})$ we have
$$[Z_1]^*(\alpha)=k_*([Z_1']^*(\alpha))$$
and
$$[Z_2]^*(\alpha)=[Z_2']^*(j^*(\alpha))$$
If $\alpha\in H^{2,2}(X)\cap H^4(X,\mathbb{Q})$, then $j^*(\alpha)$ is in $H^{2,2}(\tilde{X'})\cap H^4(\tilde{X'},\mathbb{Q})$ and $[Z_1']^*(\alpha)$ is in $H^{1,1}(\tilde{D})\cap H^2(\tilde{D},\mathbb{Q})$. The relation $$N[\Delta]^*(\alpha) = N\alpha = k_*([Z_1']^*(\alpha))+[Z_2']^*(j^*(\alpha))$$
shows that $\alpha$ is algebraic since the Hodge conjecture is known in dimension less than $4$, see theorem \ref{threefolds}.
\end{proof}
Now a theorem of Roitman \cite{roitman} proves that if a smooth complete intersection $X$ has geometric genus zero then $A_0(X)=0$, in particular the corollary above applies. 

Let $H_1,\ldots, H_k$ be hypersurfaces in $\mathbb{P}^n$ of degrees $d_1,\ldots,d_k$ respectively. If $d_1+\ldots+d_k\leq n$ then the geometric genus of a smooth irreducible component of $H_1\cap \ldots \cap H_k$ is zero. Therefore, by the above we have:
\begin{theorem}(Bloch-Srinivas \cite{bloch-s})
The Hodge conjecture is true for the following fourfolds:
\begin{itemize}
    \item cubic, quartic and quintics in $\mathbb{P}^5$
    \item intersections of two quadrics, a quadric and a cubic, two cubics, or a quartic and a quadric in $\mathbb{P}^6$
    \item intersections of two quadrics and a cubic or three quadrics in $\mathbb{P}^7$
    \item intersection of 4 quadrics in $\mathbb{P}^8$
\end{itemize}
\end{theorem}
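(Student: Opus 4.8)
The plan is to recognize every variety in the list as a smooth complete intersection fourfold whose multidegree $(d_1,\dots,d_k)$ in $\mathbb{P}^n$ satisfies the single inequality $d_1+\cdots+d_k\le n$, and then to run the Bloch-Srinivas machinery set up above. First I would record the numerology. Each entry is a complete intersection of $k=n-4$ hypersurfaces, so the cases are: in $\mathbb{P}^5$ a single hypersurface of degree $3$, $4$, or $5$ (each $\le 5$); in $\mathbb{P}^6$ two hypersurfaces with $2+2$, $2+3$, $3+3$, or $4+2$ (each $\le 6$); in $\mathbb{P}^7$ three hypersurfaces with $2+2+3=7$ or $2+2+2=6$ (each $\le 7$); and in $\mathbb{P}^8$ four quadrics with $2+2+2+2=8$ ($\le 8$). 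In every case $\sum d_i\le n$, which is exactly the hypothesis isolated before the statement.

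Next I would feed this into the genus computation recalled above. Since $\sum d_i\le n$, adjunction gives $K_X=\mathcal{O}_X(\sum d_i-n-1)$ with strictly negative twist, so a smooth member $X$ has geometric genus $p_g=0$. Roitman's theorem \cite{roitman} then yields $A_0(X)=0$, i.e. $CH_0(X)\cong\mathbb{Z}$ via degree; in particular $CH_0(X)$ is supported on a single point, a closed algebraic subset of dimension $0\le 3$. This is precisely the input required by the Bloch-Srinivas corollary, which I would invoke to conclude that every $(2,2)$-class on $X$ is algebraic.

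It then remains to treat the Hodge classes outside the middle dimension. Applying the Lefschetz hyperplane theorem (Theorem \ref{lh}) along the chain of hyperplane sections cutting out $X$, the restriction $H^k(\mathbb{P}^n,\mathbb{Q})\to H^k(X,\mathbb{Q})$ is an isomorphism for $k<4$, and by Hard Lefschetz (Theorem \ref{hl}) the analogous statement holds by duality for $k>4$; hence all primitive cohomology is concentrated in the middle degree $H^4(X)$. Consequently the Hodge classes in $H^0$ and $H^8$ are the fundamental and point classes, those in $H^2$ are algebraic by the Lefschetz $(1,1)$ theorem (Theorem \ref{lef11}), those in $H^6$ are algebraic by the duality corollary (truth in codimension $1$ forces truth in codimension $3=\dim X-1$), and those in $H^4$ are algebraic by the previous paragraph. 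Assembling these, $cl_{\otimes\mathbb{Q}}$ is surjective and the Hodge conjecture holds for $X$.

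Finally, I expect no genuine obstacle: the whole content is carried by the Bloch-Srinivas corollary and Roitman's vanishing, and what is left is the elementary degree bookkeeping above. The only points demanding care are verifying that the smoothness and complete-intersection hypotheses of Roitman's theorem are met — which holds for a general smooth member of each family — and confirming that the support of $CH_0(X)$ has dimension $\le 3$, which is automatic here since it is a point. In short, the hard part is entirely front-loaded into the two imported theorems, and the proof proper is the verification that each listed fourfold falls within their range of applicability.
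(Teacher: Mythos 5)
Your proposal is correct and follows essentially the same route as the paper: the degree bound $\sum d_i \le n$ forces geometric genus zero, Roitman's theorem then gives $A_0(X)=0$ so that $CH_0(X)$ is supported on a point, and the Bloch--Srinivas corollary yields algebraicity of the $(2,2)$-classes, the remaining degrees being handled by standard Lefschetz-type results. The details you add (adjunction to justify $p_g=0$, and the explicit treatment of cohomology outside the middle degree) simply make explicit what the paper leaves implicit.
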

\subsection*{Fermat varieties $X_m^n$} Fermat varieties of degree $m$ and dimension $n$ are hypersurfaces $X_m^n$ in $\mathbb{P}^{n+1}$ defined by the zeros of
$$x_0^m+\ldots+x_n^m=0$$
T. Shioda \cite{shioda3} pioneered the study of the Hodge conjecture for Fermat varieties, and described an algorithm to prove the conjecture for all Fermat varieties, reducing the problem to a counting problem. Unfortunately, his program fails in some cases, and a complete proof for Fermat Varieties is still lacking to this day. 

The main theorem in the context of Fermat varieties is the following fact:
\begin{theorem}(Shioda \cite{shioda3})\label{shioda3}
    Let $n=r+s$ with $r,s\geq 1$. Then there is an isomorphism 
    $$f: [H^r_{prim}(X^r_m,\mathbb{C})\otimes H^s_{prim}(X^s_m,\mathbb{C})]^{\mu_m} \oplus H^{r-1}_{prim}(X^{r-1}_m,\mathbb{C})\otimes H^{s-1}_{prim}(X^{s-1}_m,\mathbb{C})\isomto H^n_{prim}(X^n_m,\mathbb{C})$$
    with the following properties:
    \begin{itemize}
        %\item[a)] $f$ is $G^n_m$-equivariant
        \item[a)] $f$ is morphism of Hodge structures of type (0,0) on the first summand and of type $(1,1)$ on the second.
        \item[b)]If $n=2p$ then $f$ preserves algebraic cycles, moreover if $$Z_1\otimes Z_2\in H^{r-1}_{prim}(X^{r-1}_m,\mathbb{C})\otimes H^{s-1}_{prim}(X^{s-1}_m,\mathbb{C})$$
        then $f(Z_1\otimes Z_2)=mZ_1\wedge Z_2$, where $Z_1\wedge Z_2$ is the algebraic cycle obtained by joining $Z_1$ and $Z_2$ by lines on $X^n_m$, when $Z_1,Z_2$ are viewed as cycles in $X^n_m$.
    \end{itemize}
\end{theorem}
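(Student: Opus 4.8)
The plan is to deduce the statement from the eigenspace decomposition of the primitive cohomology of Fermat varieties under the group of diagonal symmetries, to match the two summands on the left with two explicit families of characters on the right, and finally to realize $f$ by algebraic correspondences so that the Hodge-theoretic and cycle-theoretic assertions follow. First I would recall the standard decomposition: the group $\mu_m^{n+2}$ acts coordinatewise on $X_m^n\subset\mathbb{P}^{n+1}$ and factors through $G_n:=\mu_m^{n+2}/\mu_m$, and the primitive cohomology splits into one-dimensional eigenspaces
$$H^n_{prim}(X^n_m,\mathbb{C})=\bigoplus_{a\in B_n}V(a),\qquad B_n=\bigl\{\,a=(a_0,\dots,a_{n+1}):a_i\in\mathbb{Z}/m\setminus\{0\},\ \textstyle\sum_i a_i=0\,\bigr\}.$$
Writing $\langle a_i\rangle$ for the representative of $a_i$ in $\{1,\dots,m-1\}$ and $|a|=\frac1m\sum_i\langle a_i\rangle$, one has $V(a)\subseteq H^{p,q}$ with $p=n+1-|a|$ and $q=|a|-1$, so in the case $n=2p$ the Hodge classes are exactly the $V(a)$ with $|a|=p+1$.

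Next I would build the combinatorial bijection underlying $f$ by partitioning $B_n$ according to the value $t:=-(a_0+\dots+a_r)$. If $t\neq0$, then $b:=(a_0,\dots,a_r,t)\in B_r$ and $c:=(-t,a_{r+1},\dots,a_{n+1})\in B_s$, and $a\mapsto(b,c)$ is a bijection onto the pairs satisfying $b_{r+1}+c_0=0$; a short check on the action of the diagonal $\mu_m$ on the glued coordinate shows that these are exactly the pairs with $[V(b)\otimes V(c)]^{\mu_m}\neq0$, so they index a basis of the first summand. If $t=0$, then $(a_0,\dots,a_r)\in B_{r-1}$ and $(a_{r+1},\dots,a_{n+1})\in B_{s-1}$, which gives the bijection onto $B_{r-1}\times B_{s-1}$ indexing the second summand. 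Assertion (a) is then a direct computation with $|a|$: in the glued case $\langle t\rangle+\langle-t\rangle=m$ forces $|a|=|b|+|c|-1$, hence $p_a=p_b+p_c$ and $f$ has type $(0,0)$ there; in the degenerate case no cancellation occurs and $|a|=|b'|+|c'|$, hence $p_a=p_{b'}+p_{c'}+1$ and $f$ has type $(1,1)$.

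It remains to realize $f$ by algebraic correspondences so that it is induced geometrically and respects algebraic classes. On the first summand I would use the gluing correspondence between $X^r_m\times X^s_m$ and $X^n_m$ obtained by identifying the matched coordinate; on the second summand I would use the join-by-lines construction, sending $Z_1\otimes Z_2$ to $m$ times the cycle $Z_1\wedge Z_2$ swept out by the lines of $X^n_m$ joining $Z_1\subset X^{r-1}_m$ to $Z_2\subset X^{s-1}_m$ under the standard linear embeddings. For (b), when $n=2p$ the classes in the second summand are tensor products of Hodge classes on $X^{r-1}_m$ and $X^{s-1}_m$; since joining algebraic cycles by lines is an algebraic operation, $f$ carries such classes to algebraic cycles, and computing the fundamental class of the join yields the normalizing factor $m$.

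I expect the decisive difficulty to lie in this last step rather than in the combinatorics, which is essentially formal once the eigenspace decomposition is in hand. The hard part is the geometry of lines on $X^n_m$: one must check that the join construction is well defined, identify the resulting cycle precisely, and prove that the correspondence induces on each $V(a)$ a nonzero scalar — and in particular the scalar $m$ on the join summand. Matching the transcendental eigenspace isomorphism with the algebraic join cycle, including pinning down this intersection-theoretic constant, is where the real content of the theorem resides.
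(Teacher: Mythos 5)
A preliminary remark: this survey does not prove the theorem at all --- it is stated as a cited result of Shioda, so your proposal has to be measured against the original argument of Shioda and Shioda--Katsura, not against anything in this paper. Measured that way, your first two paragraphs are correct and are exactly Shioda's bookkeeping: the eigenspace decomposition of $H^n_{prim}(X^n_m,\mathbb{C})$ into one-dimensional pieces $V(a)$, the partition of the character set by $t=-(a_0+\cdots+a_r)$, the identification of the $t\neq 0$ stratum with the $\mu_m$-invariant pairs and of the $t=0$ stratum with $B_{r-1}\times B_{s-1}$, and the computations $|a|=|b|+|c|-1$ (type $(0,0)$) versus $|a|=|b'|+|c'|$ (type $(1,1)$) all check out.

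The gap is exactly where you locate it, but it is not a residual difficulty to be handled after the combinatorics --- it is the theorem, and your plan does not contain the idea that resolves it. A map defined eigenspace-by-eigenspace through your bijection, with arbitrary nonzero scalars, satisfies (a) tautologically; assertion (b) is a statement about one specific, geometrically defined $f$, and your proposal realizes the two summands by two unrelated constructions (an unspecified ``gluing correspondence'' on the first, the join on the second), which is not how the proof goes and leaves you needing to prove that two ad hoc maps are injective on each eigenspace, have complementary images, and carry the normalization $m$. What Shioda--Katsura actually do is produce both summands from a single diagram: the rational map $\psi:X^r_m\times X^s_m\dashrightarrow X^n_m$, $(x,y)\mapsto(x_0y_{s+1}:\cdots:x_ry_{s+1}:\varepsilon x_{r+1}y_0:\cdots:\varepsilon x_{r+1}y_s)$ with $\varepsilon^m=-1$, which is generically finite of degree $m$ (the quotient by a $\mu_m$-action) and whose indeterminacy locus is precisely $X^{r-1}_m\times X^{s-1}_m$. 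Blowing that locus up resolves $\psi$, and the quotient of the blow-up by $\mu_m$ is the blow-up of $X^n_m$ along the two disjoint linear sections $X^{r-1}_m\sqcup X^{s-1}_m$; comparing the blow-up formulas for cohomology on the two sides and taking $\mu_m$-invariants yields, in one stroke, the invariant K\"unneth summand and the exceptional-divisor summand, the latter with the twist that accounts for type $(1,1)$. The join formula is then forced by this geometry rather than verified separately: the exceptional $\mathbb{P}^1$ over $(p,q)$ maps to the line in $X^n_m$ joining $p\in X^{r-1}_m$ to $q\in X^{s-1}_m$, so the exceptional divisor over $Z_1\times Z_2$ sweeps out $Z_1\wedge Z_2$, and the factor $m$ comes from the degree of the quotient map, not from ``computing the fundamental class of the join.'' Without this single correspondence, your outline establishes only the linear-algebra shell of the statement, not the assertion that one algebraic map simultaneously satisfies (a) and (b).
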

Using the above theorem and the fact that the cohomology of Fermat varieties decomposes as a direct sum of Eigenspaces for finite group action, Shioda was able to reduce the problem to verifying a condition on a finite number of elements of a semi-group. His result was the following:
\begin{theorem}(Shioda \cite{shioda3})
    The Hodge conjeture is true for Fermat varieties $X^n_m$ with degree less than $21$.
\end{theorem}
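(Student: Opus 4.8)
The plan is to reduce the conjecture, via the inductive structure of Theorem \ref{shioda3}, to a finite combinatorial statement about characters, and then to settle that statement for every degree $m \le 20$ by direct computation.

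Since $X^n_m$ is a smooth hypersurface in $\mathbb{P}^{n+1}$, the Lefschetz hyperplane theorem (Theorem \ref{lh}) together with the discussion of hypersurfaces above reduces everything to the Hodge classes in the middle primitive cohomology $H^n_{prim}(X^n_m,\mathbb{C})$; in particular we may assume $n = 2p$ is even. I would first record the eigenspace decomposition of this space under the group $G = \mu_m^{n+2}/\mu_m$ of diagonal automorphisms of $X^n_m$: it splits into one-dimensional eigenspaces $V(a)$ indexed by
\[
\mathfrak{A}^n_m = \{\, a = (a_0,\dots,a_{n+1}) : a_i \in (\mathbb{Z}/m\mathbb{Z})\setminus\{0\},\ \textstyle\sum_i a_i \equiv 0 \bmod m \,\}.
\]
Writing $\|a\| = \tfrac1m\sum_i \langle a_i\rangle$ with $\langle a_i\rangle \in \{1,\dots,m-1\}$ the residue, $V(a)$ has Hodge type $(p,q)$ with $p = \|a\|-1$. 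Because the eigenspaces are only defined over $\mathbb{Q}(\zeta_m)$, a $\mathbb{Q}$-rational Hodge class is a Galois-stable sum, so the rational Hodge classes correspond exactly to the $(\mathbb{Z}/m)^\times$-orbits contained in
\[
\mathfrak{B}^n_m = \{\, a \in \mathfrak{A}^n_m : \|ta\| = \tfrac n2 + 1 \ \text{ for all } t \in (\mathbb{Z}/m)^\times \,\}.
\]
Thus the theorem is equivalent to the assertion that the class attached to every $a \in \mathfrak{B}^n_m$ is algebraic.

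Next I would use Theorem \ref{shioda3} to generate algebraic Hodge classes inductively in $n$. The key point is its second summand: it identifies $H^{r-1}_{prim}(X^{r-1}_m)\otimes H^{s-1}_{prim}(X^{s-1}_m)$ with a subspace of $H^n_{prim}(X^n_m)$ through a map of Hodge type $(1,1)$ that sends $Z_1\otimes Z_2$ to $m\,Z_1\wedge Z_2$, an honest algebraic cycle (the join of $Z_1$ and $Z_2$ by lines) whenever $Z_1,Z_2$ are algebraic. Combinatorially this join is the concatenation of index tuples: an $(r+1)$-tuple from $\mathfrak{B}^{r-1}_m$ and an $(s+1)$-tuple from $\mathfrak{B}^{s-1}_m$ produce an $(n+2)$-tuple in $\mathfrak{B}^n_m$ with $n=r+s$. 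Taking as base cases the zero-dimensional Fermat schemes and the Fermat surfaces $X^2_m$ — whose middle Hodge classes are algebraic by the Lefschetz $(1,1)$ theorem (Theorem \ref{lef11}) — I would call $a \in \mathfrak{B}^n_m$ \emph{decomposable} if it lies in the sub-semigroup of $\bigsqcup_k \mathfrak{B}^k_m$ generated from these base cases under concatenation, and observe that every decomposable class is algebraic by induction.

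With this in place the geometric problem collapses to the purely arithmetic claim that $\mathfrak{B}^n_m$ consists \emph{entirely} of decomposable elements, i.e. that the count of decomposable Hodge characters matches the total count of Hodge characters; for fixed $m$ there are only finitely many even $n$ and finitely many orbits in each $\mathfrak{B}^n_m$, so this is a finite verification. This last step is the whole difficulty, and is exactly why the statement is restricted to $m<21$: one must show that no \emph{indecomposable} Hodge character — one surviving the Galois condition $\|ta\| = \tfrac n2+1$ yet admitting no admissible splitting into joins — occurs in the range. I expect this case analysis, organized around which tuples of residues summing to $m(\tfrac n2 + 1)$ resist decomposition, to be the substance of the proof; it is known to fail, the first indecomposable classes (whose algebraicity is still open) appearing near $m = 21$, which marks the boundary of the result.
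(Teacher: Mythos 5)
Your reduction is exactly the program this survey attributes to Shioda --- and indeed the paper gives no proof of this theorem at all, stating it with a citation to \cite{shioda3} after a one-sentence description of the method --- so the framework you set up (eigenspace decomposition under the diagonal automorphisms, Hodge characters $\mathfrak{B}^n_m$ stable under $(\mathbb{Z}/m)^\times$, algebraicity of decomposable characters by induction via the join map of Theorem \ref{shioda3}) is the right one. But as a proof it has two genuine gaps. First, your finiteness claim is false as stated: for fixed $m$ there are \emph{infinitely} many even $n$ with $\mathfrak{B}^n_m\neq\emptyset$ (e.g.\ the character $(1,m-1,1,m-1,\dots,1,m-1)$ lies in $\mathfrak{B}^n_m$ for every even $n$), so ``every element of $\mathfrak{B}^n_m$ is decomposable'' is an infinite family of assertions, one for each $n$. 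Shioda needs a genuine structural lemma showing that indecomposable characters for a given $m$ can only occur with $n$ below an explicit bound depending on $m$; without such a lemma the ``finite verification'' you appeal to does not exist. Second, even granting finiteness, the verification itself for each $m\leq 20$ --- which is the entire content of the theorem, since everything preceding it applies to every degree $m$ and hence proves nothing about the range $m<21$ specifically --- is deferred rather than carried out. What you have actually written is a proof that ``Shioda's decomposability condition in degree $m$ implies the Hodge conjecture for all $X^n_m$,'' not a proof of the stated theorem.

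One smaller but real inaccuracy: the bound $21$ is not where indecomposable classes first appear. As this paper itself records, Shioda's condition holds for $m=21$ and $m=27$ (computer verification, \cite{jr1}); the known failures occur at $m=p^2$ for $p$ prime, where the exceptional classes were nonetheless shown to be algebraic by Aoki \cite{aoki}, and at $m=33$, where algebraicity remains open. The hypothesis $m<21$ marks the limit of Shioda's case analysis, not the onset of indecomposability, so your closing justification of the hypothesis should be corrected.
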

For $X^n_m$ of prime degree, we actually can say more:
\begin{theorem}(Shioda, Ran \cite{shioda3,ran})
    If $m$ is prime or $m=4$, the cohomology ring of $X^n_m$ is generated by linear subspaces, thus the Hodge conjecture is true for $X^n_m$.
\end{theorem}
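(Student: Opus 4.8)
The plan is to establish the stronger assertion quoted in the theorem: every Hodge class on $X^n_m$ is a $\mathbb{Q}$-linear combination of classes of linear subspaces $\mathbb{P}^k\subseteq X^n_m$ (together with powers of the hyperplane class). Once this is known the Hodge conjecture is immediate, since linear subspaces are algebraic and so the cycle class map is surjective onto $\bigoplus_p H^{p,p}(X^n_m)\cap H^{2p}(X^n_m,\mathbb{Q})$. The first reduction is to the middle cohomology: by the Lefschetz hyperplane theorem (Theorem \ref{lh}) applied to the inclusion $X^n_m\subset\mathbb{P}^{n+1}$, every class in $H^{k}(X^n_m,\mathbb{Q})$ with $k<n$ is a power of the hyperplane class, hence of the required form, and by Hard Lefschetz (Theorem \ref{hl}) the same holds for $k>n$. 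Thus it suffices to treat the primitive part $H^n_{prim}(X^n_m,\mathbb{Q})$, and only $n=2p$ is genuine, odd $n$ carrying no Hodge class in the middle degree.

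For the middle degree I would use the eigenspace decomposition of $H^n_{prim}(X^n_m,\mathbb{C})$ under the group $G=\mu_m^{\,n+2}/\mu_m$ acting coordinatewise. It splits into one-dimensional eigenspaces $V(\mathbf{a})$ indexed by tuples $\mathbf{a}=(a_0,\dots,a_{n+1})$ with each $a_i\in(\mathbb{Z}/m)\setminus\{0\}$ and $\sum_i a_i\equiv 0\pmod m$, and the Hodge type of $V(\mathbf{a})$ is read off from the fractional sum $\sum_i\langle a_i/m\rangle$. Concretely $V(\mathbf{a})$ contributes to $H^{p,p}$ exactly when $\sum_i\langle t a_i/m\rangle=\tfrac{n}{2}+1$ for every $t\in(\mathbb{Z}/m)^{\times}$; the space of Hodge classes is then the sum of the rational Galois orbits of such $V(\mathbf{a})$, and computing its dimension is a finite combinatorial problem.

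The cycles I would play against this count are of two kinds. First, the linear $\tfrac{n}{2}$-planes that $X^n_m$ manifestly contains, obtained by pairing up the coordinates through relations $x_{2i}=\zeta_i\,x_{2i+1}$ with $\zeta_i^m=-1$; their fundamental classes can be expanded in the basis $\{V(\mathbf{a})\}$ by a direct character computation. Second, the inductive supply furnished by Theorem \ref{shioda3}(b): a decomposable class $Z_1\otimes Z_2$ in $H^{r-1}_{prim}(X^{r-1}_m)\otimes H^{s-1}_{prim}(X^{s-1}_m)$ is carried by $f$ to $m\,Z_1\wedge Z_2$, the join by lines, so that linear subspaces on the lower-dimensional Fermat varieties produce, inductively, linear subspaces on $X^n_m$ together with their known classes. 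These two constructions give an explicit, computable subspace of $H^n_{prim}$ spanned by linear cycles.

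The heart of the matter, and the step I expect to be the main obstacle, is to show that this subspace exhausts all the Hodge eigenspaces — that there are enough linear subspaces, and in sufficiently general position, that their classes span the full space of primitive Hodge classes identified above. In general this fails, and it is precisely here that the hypothesis on $m$ enters: when $m$ is prime the Galois group $(\mathbb{Z}/m)^{\times}$ acts on the indexing tuples $\mathbf{a}$ with large, uniform orbits, which both forces the Hodge eigenspaces to occur in predictable Galois-stable packets and makes the character expansion of the linear-subspace classes transparent enough to match them one for one; the degenerate small case $m=4$, where $(\mathbb{Z}/4)^{\times}$ has order two, is handled by the same bookkeeping. Carrying out this spanning/counting argument — rather than the formal Lefschetz reductions or the join construction — is where the real work lies.
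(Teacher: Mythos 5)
The paper itself gives no proof of this theorem; it is quoted directly from Shioda \cite{shioda3} and Ran \cite{ran}, so your attempt can only be measured against the arguments in those references. Your setup does track them correctly: the reduction to primitive middle cohomology via Theorems \ref{lh} and \ref{hl}, the decomposition of $H^n_{prim}(X^n_m,\mathbb{C})$ into one-dimensional eigenspaces $V(\mathbf{a})$ for the group $\mu_m^{n+2}/\mu_m$, the criterion that $V(\mathbf{a})$ contributes Hodge classes exactly when $\sum_i\langle t a_i/m\rangle=\tfrac{n}{2}+1$ for every $t\in(\mathbb{Z}/m)^{\times}$, and the two sources of linear cycles (coordinate-pairing planes $x_{2i}=\zeta_i x_{2i+1}$, and the join construction of Theorem \ref{shioda3}) are all the right ingredients.

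However, there is a genuine gap, and you identify it yourself: the spanning step you defer as ``where the real work lies'' is the entire content of the theorem, and your proposed mechanism for it is not adequate. What is actually needed is the combinatorial lemma that for $m$ prime, every admissible index $\mathbf{a}=(a_0,\dots,a_{n+1})$ (all $a_i\not\equiv 0$, $\sum_i a_i\equiv 0$) satisfying the Hodge condition for all $t$ is, up to permutation, \emph{decomposable}: its entries pair off with $a_i+a_j\equiv 0\pmod m$. This is what forces every Hodge eigenspace to arise from the induced (join) structure, hence from linear subspaces, and it is proved by a specific counting argument that uses primality, with a separate ad hoc verification for $m=4$. Your heuristic --- that $(\mathbb{Z}/m)^{\times}$ has ``large, uniform orbits'' making the matching transparent --- cannot be the real reason, because the identical Galois-orbit formalism applies to $m=p^2$, where $(\mathbb{Z}/p^2)^{\times}$ is just as large, and yet decomposability fails there: as the paper notes, Shioda's program breaks for $X^n_{33}$ and for $X^n_{p^2}$, and Aoki's classes \cite{aoki} for degree $p^2$ are precisely Hodge classes that do not come from the induced structure. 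So any correct proof must isolate the arithmetic feature of prime $m$ that excludes such indices; without it your argument establishes only the framework. A secondary, smaller gap: you would also need the explicit character expansion showing that the classes of the coordinate-pairing planes span \emph{all} decomposable eigenspaces rather than a proper subspace of them; this is a finite computation in Ran's paper, but it must be carried out.
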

Very recently, using Shioda's results I was able to prove
\begin{theorem}(da Silva \cite{jr1})
If $m$ is coprime to $6$, then the Hodge conjecture is true for all Fermat fourfolds $X^4_m$. Also, a computer verification proves that the conjecture is also true  for $X^n_{21}$ and $X^n_{27}$.
\end{theorem}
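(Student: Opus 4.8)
The plan is to combine the Lefschetz hyperplane theorem, the eigenspace decomposition of the cohomology of a Fermat variety, and Shioda's inductive structure (Theorem \ref{shioda3}) in order to reduce the statement to a purely combinatorial condition on characters, which one then settles by elementary number theory when $\gcd(m,6)=1$ and by a finite machine computation for $m=21,27$.

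First I would reduce to the middle primitive cohomology. Since $X^4_m$ is a hyperplane section of $\mathbb{P}^5$ and the Hodge conjecture is trivial for $\mathbb{P}^5$, Theorem \ref{lh} shows that the only classes left to treat lie in $H^{2,2}(X^4_m)\cap H^4(X^4_m,\mathbb{Q})_{prim}$. The Fermat group $\mu_m^6/\mu_m$ acts on $X^4_m$, and the primitive cohomology splits into one-dimensional eigenspaces $V(\mathbf{a})$ indexed by characters $\mathbf{a}=(a_0,\dots,a_5)$ with $a_i\in\mathbb{Z}/m\setminus\{0\}$ and $\sum_i a_i\equiv 0$. Recording the Hodge type through $\langle\cdot\rangle$, the representative in $\{1,\dots,m-1\}$, the eigenvector $V(\mathbf{a})$ is of type $(2,2)$ exactly when $\sum_i\langle a_i\rangle=3m$. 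Because the action is defined over $\mathbb{Q}$, a rational Hodge class is a sum over a full $(\mathbb{Z}/m)^\times$-orbit, so I may restrict attention to the \emph{Hodge characters}, those $\mathbf{a}$ with
$$\sum_{i=0}^{5}\langle t a_i\rangle=3m \qquad\text{for every } t\in(\mathbb{Z}/m)^\times,$$
and it suffices to prove that each such orbit gives an algebraic class.

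Next I would feed these characters into Shioda's inductive structure. Writing $4=r+s$ and applying Theorem \ref{shioda3}, a Hodge class whose character splits compatibly into blocks coming from lower Fermat varieties is algebraic: the second summand produces it by joining lower-dimensional algebraic cycles along lines, while the first summand transports products of lower algebraic classes through the $(0,0)$-morphism. Running this downward, the base cases are Fermat curves, surfaces and threefolds, where the Hodge conjecture already holds by Theorem \ref{lef11} and Theorem \ref{threefolds}. This is precisely Shioda's reduction: a Hodge class is algebraic as soon as its character is \emph{decomposable}, and decomposability is stable under the combinatorial operations attached to the two summands, so what remains is to verify a condition on the finite semigroup of indecomposable Hodge characters.

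The hard part will be this last step: showing that when $\gcd(m,6)=1$ there are no indecomposable Hodge characters of length six at all. Concretely I would analyze a putative $\mathbf{a}=(a_0,\dots,a_5)$ admitting no proper sub-configuration that is itself a lower Hodge class, translate the ``for every $t$'' balance condition $\sum_i\langle t a_i\rangle=3m$ into divisibility constraints on the $a_i$, and show that these can be met only when $2\mid m$ or $3\mid m$. The guiding intuition is that the genuinely indecomposable six-term configurations are assembled from the order-two element $m/2$ and the order-three elements $m/3,2m/3$ of $\mathbb{Z}/m$, which are exactly the balanced pieces that survive without forcing a complementary pair $a_i+a_j\equiv 0$ or a similar splitting; removing the primes $2$ and $3$ deletes these elements and hence collapses every Hodge character to a decomposable one. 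I expect this additive-combinatorics classification, rather than any geometry, to be the crux. Finally, for $m=21$ and $m=27$, which are divisible by $3$ and so escape the argument, I would invoke the stabilization implicit in Shioda's program—decomposability in all dimensions is controlled by finitely many semigroup generators—so that a direct computer enumeration of the finitely many indecomposable candidates certifies the Hodge conjecture for $X^n_{21}$ and $X^n_{27}$ for every $n$.
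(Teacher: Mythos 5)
Your plan follows the same route as the cited proof in \cite{jr1}: reduce via Theorem \ref{lh} to primitive $(2,2)$-classes, decompose $H^4_{prim}(X^4_m,\mathbb{C})$ into one-dimensional character eigenspaces, identify rational Hodge classes with characters satisfying $\sum_i\langle ta_i\rangle=3m$ for all $t\in(\mathbb{Z}/m)^\times$, and invoke Shioda's inductive structure (Theorem \ref{shioda3}) so that (quasi-)decomposable characters yield algebraic classes, leaving a finite combinatorial problem. But everything up to that point is Shioda's reduction, which the survey already quotes as background; the actual content of the theorem is the step you explicitly defer, namely that for $\gcd(m,6)=1$ every length-six Hodge character is (quasi-)decomposable. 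That step is missing: you describe what you ``would'' do and offer a guiding intuition, but no argument.

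Moreover, the intuition you propose cannot be the mechanism. You claim the genuinely indecomposable configurations are assembled from the elements $m/2$, $m/3$, $2m/3$, so that removing the primes $2$ and $3$ collapses every Hodge character to a decomposable one. This principle is false as a general statement about Fermat varieties: for $m=p^2$ with $p\geq 5$ prime we have $\gcd(m,6)=1$ and $\mathbb{Z}/p^2$ contains no elements of order $2$ or $3$ at all, yet Shioda's program fails for $X^n_{p^2}$ --- there are Hodge characters not coming from the induced structure, which is precisely why Aoki \cite{aoki} had to construct new cycles in that case. So the absence of order-$2$ and order-$3$ elements does not by itself force decomposability; any correct proof must exploit, quantitatively, the fact that a fourfold character has only six entries, and the arithmetic case analysis that extracts decomposability from this length bound (the heart of \cite{jr1}) is absent from your proposal. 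Two lesser imprecisions: Hodge classes landing in the summand $[H^1_{prim}\otimes H^3_{prim}]^{\mu_m}$ are never ``products of lower algebraic classes'' (odd-weight Hodge structures have no Hodge classes), so their algebraicity requires Shioda's induction carried out over products of Fermat varieties rather than the transport you describe; and your claim that a computer check certifies $X^n_{21}$ and $X^n_{27}$ for every $n$ rests on a stabilization result of Shioda reducing infinitely many dimensions to finitely many verifications, which you invoke but neither state nor justify.
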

An interesting case is the Fermat $X^n_{33}$. A simple computer verification \cite{jr1} shows that Shioda's program fails in this case. Hence, there are Hodge cycles not coming from the induced structure in this case, and we can't use Shioda's method to prove the Hodge conjecture for all Fermat varieties. Interestingly, his program also fails for $X^n_{p^2}$ for $p$ prime, but N. Aoki \cite{aoki} proved the Hodge conjecture in this case:
\begin{theorem}(Aoki \cite{aoki})
The Hodge conjecture is true for $X^n_{p^2}$, $p$ prime.
\end{theorem}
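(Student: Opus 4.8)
The plan is to run Shioda's eigenspace machinery and to split the rational Hodge classes into two families according to $p$-divisibility, treating the ``exceptional'' family — the one responsible for the failure of Shioda's program at degree $p^2$ — by a separate pullback argument, while disposing of the rest by the inductive join structure of Theorem \ref{shioda3}.

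First I would recall the eigenspace decomposition underlying Theorem \ref{shioda3}. Write $n = 2q$ (even, since otherwise there are no nonzero $(q,q)$-classes) and
$$H^n_{prim}(X^n_{p^2},\mathbb{C}) = \bigoplus_{\mathbf{a}} V(\mathbf{a}),$$
where each $V(\mathbf{a})$ is one-dimensional and $\mathbf{a} = (a_0,\dots,a_{n+1})$ runs over characters with $a_i \in \mathbb{Z}/p^2\mathbb{Z}\setminus\{0\}$ and $\sum_i a_i \equiv 0$. The Hodge type of $V(\mathbf{a})$ is governed by $|\mathbf{a}| = \tfrac1{p^2}\sum_i \langle a_i\rangle$, and a rational Hodge class corresponds to a Galois orbit on which $|t\mathbf{a}| = q+1$ for every $t \in (\mathbb{Z}/p^2\mathbb{Z})^\times$. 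Thus the problem reduces to producing an algebraic representative for each such Hodge character, and I would argue by induction on the dimension $n$ at fixed degree $p^2$, the base cases $n\le 3$ being covered by Theorem \ref{threefolds}.

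Next I would divide the Hodge characters into those with every $a_i \equiv 0 \pmod p$ and those with at least one $a_i$ a unit modulo $p^2$. For the first family I would use the finite surjective morphism
$$\phi : X^n_{p^2} \to X^n_p, \qquad \phi([x_0:\cdots:x_{n+1}]) = [x_0^p:\cdots:x_{n+1}^p],$$
well defined since $\sum_i x_i^{p^2}=0$ forces $\sum_i (x_i^p)^p = 0$. Note that Lemma \ref{fin_d} transfers algebraicity in the wrong direction here, so instead I would use that $\phi^*$ is an injective morphism of Hodge structures whose image on primitive cohomology is precisely the span of the eigenspaces $V(\mathbf{a})$ with $\mathbf{a} = p\mathbf{b}$, i.e. the $p$-divisible family. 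By the theorem of Shioda and Ran, the Hodge conjecture holds for the prime-degree Fermat $X^n_p$; hence every rational Hodge class in the $p$-divisible family is $\phi^*$ of an algebraic class on $X^n_p$, and is therefore algebraic.

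For the second family I would invoke the inductive structure of Theorem \ref{shioda3}: with $n = r+s$, the isomorphism $f$ expresses classes of $X^n_{p^2}$ in terms of lower-dimensional data — either as invariant tensors of the primitive cohomology of $X^r_{p^2}, X^s_{p^2}$ (the first summand), or as line-joins $Z_1\wedge Z_2$ of cycles on $X^{r-1}_{p^2}, X^{s-1}_{p^2}$ (the second summand), which part (b) shows are algebraic whenever $Z_1,Z_2$ are. Iterating this recursion and applying the inductive hypothesis to the lower-dimensional Hodge classes, it suffices to prove that every Hodge character containing a unit entry is \emph{decomposable}, i.e. reachable by this process down to linear-space and lower-dimensional building blocks. \textbf{This combinatorial classification is the main obstacle}, as it is exactly where Shioda's program is known to fail for degree $p^2$. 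The decisive number-theoretic point to establish is the dichotomy that the indecomposable Hodge characters of degree $p^2$ are \emph{precisely} the $p$-divisible ones already handled by $\phi^*$; granting this, no indecomposable character survives in the second family. The two families then exhaust the rational Hodge classes and each is algebraic, closing the induction.
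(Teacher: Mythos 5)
The paper itself gives no proof of this theorem: it is quoted from Aoki, and the sentence immediately after it (``the cycles that do not come from the induced structure can be explicitly constructed'') describes what Aoki's proof actually does. Measured against that, your first family is handled correctly but is the trivial part of the problem. If $\mathbf{a}=p\mathbf{b}$ is a $p$-divisible character, then $\langle tpb_i\rangle_{p^2}=p\langle tb_i\rangle_{p}$ shows that $\mathbf{a}$ is a Hodge character mod $p^2$ exactly when $\mathbf{b}$ is one mod $p$; and the combinatorial fact underlying the Shioda--Ran theorem is that every Hodge character of prime degree can be permuted into pairs $(c,-c)$. Hence $\mathbf{a}$ itself splits into pairs $(pc,-pc)$, i.e.\ it is decomposable, and the corresponding Hodge classes are already spanned by linear subspaces. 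Your pullback along $\phi$ is valid, but it disposes of classes that were never in doubt.

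The fatal gap is that your ``decisive dichotomy'' is exactly backwards. Since the $p$-divisible Hodge characters are decomposable, every indecomposable (exceptional) Hodge character of degree $p^2$ must contain unit entries, i.e.\ must lie in your second family; and such characters do exist --- that is precisely the statement, made in the paper just before this theorem, that Shioda's program fails for $X^n_{p^2}$. A concrete instance for $p=5$: the tuple $\mathbf{a}=(1,6,11,16,21,20)$ satisfies $\sum_i a_i\equiv 0 \pmod{25}$ and $\sum_i\langle ta_i\rangle_{25}=75$ for every unit $t$ (here $\langle\cdot\rangle_{25}$ denotes least positive residues), so it is a Hodge character on the fourfold $X^4_{25}$; it has five unit entries, and no proper sub-tuple sums to $0$ modulo $25$, so it is indecomposable and not $p$-divisible. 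Granting your dichotomy would amount to asserting that the recursion of Theorem \ref{shioda3} reaches every Hodge class of degree $p^2$, which is exactly what Aoki showed to be false. So your second family is where the entire content of the theorem lives, and your proposal treats it only with the machinery that provably does not reach it. What is missing --- and what constitutes Aoki's actual proof --- is an explicit geometric construction of algebraic cycles representing these exceptional classes, something that cannot be extracted from the induced structure, from $\phi^*$, or from Lemma \ref{fin_d}.
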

In this case, the cycles that do not come from the induced structure can be explicitly constructed. Sadly, it's unlikely his proof can be generalized in all degrees. 

Shioda's results also proved a special case of the Hodge conjecture for Abelian varieties, namely, those of Fermat type.
\begin{definition}
An abelian variety $A$ is said to be of Fermat type of degree $m$ if it is isogeneous to a product $J_1\times \ldots \times J_k$ of Jacobian of curves dominated by the Fermat curve $X^1_m$.
\end{definition}
A simple application of the discussion above gives:
\begin{theorem}(Shioda,\cite{shioda3})
If the Hodge conjecture is true for the Fermat $X^n_m$ regardless of $n$, then it's true for any abelian variety $A$ of Fermat type of degree $m$.
\end{theorem}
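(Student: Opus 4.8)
The plan is to propagate the Hodge conjecture from the Fermat varieties $X^n_m$ to $A$ along a chain of finite morphisms, reducing everything to the cohomology of products of the Fermat curve $X^1_m$ and then invoking the inductive structure of Theorem \ref{shioda3}.

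First I would reduce to products of Fermat curves. Because each $C_i$ is dominated by $X^1_m$, functoriality of the Jacobian gives surjections $\mathrm{Jac}(X^1_m)\twoheadrightarrow J_i$ and hence a surjection $\mathrm{Jac}(X^1_m)^k\twoheadrightarrow J_1\times\cdots\times J_k$; since $A$ is isogenous to the target, $A$ is, up to isogeny, a direct factor of $\mathrm{Jac}(X^1_m)^k$. An isogeny is finite surjective, so by Lemma \ref{fin_d} the conjecture propagates along it; and it descends to a direct factor by Poincaré reducibility, writing $\mathrm{Jac}(X^1_m)^k\sim A'\times B$, pulling a Hodge class on $A'$ back along the projection, and restricting the resulting cycle to a generic slice $A'\times\{b_0\}$. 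Thus it suffices to prove the conjecture for $\mathrm{Jac}(X^1_m)^k$. Finally the Abel--Jacobi map gives a generically finite surjection $(X^1_m)^{gk}\to\mathrm{Jac}(X^1_m)^k$, with $g$ the genus of $X^1_m$, so by Lemma \ref{fin_d} the whole problem reduces to the Hodge conjecture for products of Fermat curves $(X^1_m)^M$.

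On $(X^1_m)^M$ I would split a Hodge class by the K\"unneth formula, which respects Hodge types. In each factor the classes of $H^0$ and $H^2$ are the fundamental class and the point class, both algebraic, so after tensoring with these it is enough to treat a Hodge class lying in $\bigotimes_{i\in S}H^1(X^1_m)$ for a subset $S$ of even size $2q$, i.e.\ in the top K\"unneth component of $(X^1_m)^{2q}$. Now decompose $H^1(X^1_m)^{\otimes 2q}$ into eigenspaces for the natural finite group action and iterate the isomorphism $f$ of Theorem \ref{shioda3} with $s=1$, which reduces the Fermat dimension one step at a time from $2q$ down to $1$. This identifies the relevant invariant pieces, compatibly with the Hodge structure and with algebraic cycles, with primitive cohomology of the Fermat varieties $X^j_m$ for $j\le 2q$, up to the lower order join terms of part (b). A Hodge class on the product therefore corresponds under $f$ to Hodge classes on the $X^j_m$; these are algebraic by hypothesis, and because $f$ is induced by an algebraic correspondence its transpose sends the resulting cycles back to algebraic cycles on $(X^1_m)^{2q}$, the explicit formula $f(Z_1\otimes Z_2)=mZ_1\wedge Z_2$ of part (b) making this transport concrete up to the harmless factor $m$. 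Pushing forward along the finite maps of the first step then exhibits the original class on $A$ as algebraic.

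The hard part will be the matching in the last paragraph: controlling the eigenspace decomposition of $H^1(X^1_m)^{\otimes 2q}$ against Shioda's recursion, keeping track at each stage of the $\mu_m$-invariance (equivalently, the congruence conditions on the defining characters) and of the lower order correction terms, and verifying that a Hodge eigenclass on the product really does correspond to a Hodge class on some $X^j_m$ rather than escaping the inductive picture. It is precisely here that one uses the full strength of the hypothesis, namely the Hodge conjecture for every $X^n_m$ including the exotic classes that are not produced by the inductive structure itself; granting that the correspondences inducing $f$ and its transpose both act on algebraic cycles, the rest is the bookkeeping that makes the statement a genuine corollary of Theorem \ref{shioda3}.
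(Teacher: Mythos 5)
Your first step---reducing to a product of Fermat curves via finite-degree morphisms and Lemma \ref{fin_d}---is exactly the paper's reduction, though you take a workable but unnecessary detour: instead of Poincar\'e reducibility and descent to a direct factor (which forces you to also justify that the Hodge conjecture passes from a product to a factor by slicing), you can simply compose the finite surjections $X^1_m\to C_i$, the generically finite surjections $C_i^{g_i}\to J_i$ coming from Abel--Jacobi, and an isogeny $J_1\times\cdots\times J_k\to A$ to get a single morphism of finite degree $(X^1_m)^N\to A$; this is all the paper means by ``by definition of Jacobian, we can find a morphism of finite degree.''

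The genuine gap is in your second step. The statement you sketch there---that the Hodge conjecture for all $X^n_m$ implies it for products of Fermat curves $(X^1_m)^M$---is not ``bookkeeping that makes the statement a genuine corollary of Theorem \ref{shioda3}''; it is a separate theorem of Shioda (Theorem IV of \cite{shioda3}), whose proof is the character-theoretic heart of that paper, and the survey's proof of the present statement simply cites it. Your outline (K\"unneth, eigenspace decomposition, iterating $f$ with $s=1$, transporting algebraicity back through the transpose of the join correspondence) points in the right direction, but the step you explicitly defer---verifying that every Hodge eigenclass on $(X^1_m)^{2q}$ is actually matched through the inductive structure with Hodge classes on the various $X^j_m$, rather than escaping it---is precisely where all the difficulty lives. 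That such ``capturing'' statements are delicate combinatorial facts rather than formal consequences is illustrated in the survey itself: the analogous statement for Hodge classes on the Fermat varieties fails for $X^n_{33}$ and for $X^n_{p^2}$. So as written your proposal is incomplete at its decisive point; it becomes a correct proof---indeed, essentially the paper's proof---once that paragraph is replaced by a citation of Shioda's product theorem, or by an actual reproduction of his eigenspace analysis.
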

\begin{proof}
Indeed, by definition of Jacobian, we can find a morphism of finite degree $f:X^1_m\times \ldots \times X^1_m\to A$. Now the Hodge conjecture is true for the product $X^1_m\times \ldots \times X^1_m$ \cite[Theorem IV]{shioda3}. The result then follows from lemma \ref{fin_d}.
\end{proof}
\begin{remark}
This proof drastically differs from the \say{usual} proof of the Hodge conjecture for Abelian varieties --as we'll see next-- in the sense that the majority of cases is proven by verifying the fact that the cohomology is generated by the intersection of divisors. In particular, for Abelian varities of Fermat type the conjecture is true even in cases where the cohomology ring is not generated by divisors.
\end{remark}
\subsection*{Abelian varieties}

The case of general Abelian varieties is a interesting one, in the sense that besides being smooth complex and projective, it's also an abelian group, so techniques from group theory and representation theory are available. 

In this subsection $A$ will always denote an Abelian variety.

\begin{definition}
Let $B^p(A)$ be the group of codimension $p$ Hodge cycles, $C^p(A)$ be the group of codimension $p$ algebraic cyles. We set $D^p(A)=\bigwedge_p C^1(A)$, the subgroup generated by divisors. 
\end{definition}
Note that by Lefschetz's $(1,1)$ theorem, $B^1(A)=C^1(A)$ and
$$D^p(A) \subseteq C^p(A) \subseteq B^p(A)$$
As we said above, in practically all cases of Abelian varieties the method of proof of the Hodge conjecture is the same, namely, show that $D^p(A)= B^p(A)$ and hence $C^p(A) = B^p(A)$.
\begin{theorem}The condition $D^p(A)= B^p(A)$ is true for the following class of abelian varieties:
\begin{itemize}
    \item[i)] (Tate,Murasaki \cite{tate,murasaki}) $E^n$, self product of an elliptic curve.
    \item[ii)] (Tankeev, Ribet \cite{tankeev,ribet}) Simple abelian variety of prime dimension.
    \item[iii)] (Hall,Kuga \cite{kuga-hall}) Generic fibers of a certain family of abelian varieties.
    \item[iv)] (Mattuck \cite{mattuck}) Certain abelian varieties with a condition on the period matrix.
\end{itemize}

\end{theorem}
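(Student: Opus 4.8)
The plan is to treat all four cases through a single lens---the Hodge (Mumford--Tate) group---and reduce each to a problem in classical invariant theory. The starting point is that for an abelian variety the cohomology is the exterior algebra $H^*(A,\mathbb{Q})=\bigwedge^\bullet V$ on $V:=H^1(A,\mathbb{Q})$, and the rational Hodge structure on all of $H^*(A,\mathbb{Q})$ is cut out by a single reductive group $\mathrm{Hg}(A)\subseteq \mathrm{Sp}(V)$, the \emph{Hodge group}. The crucial reformulation is that the Hodge classes $B^p(A)\subseteq \bigwedge^{2p}V$ are exactly the $\mathrm{Hg}(A)$-invariants, while $B^1(A)=C^1(A)$ sits inside $\bigwedge^2 V$ and $D^p(A)$ is spanned by $p$-fold cup products of classes in $B^1(A)$. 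Thus the identity $D^p(A)=B^p(A)$ is equivalent to the purely representation-theoretic statement that the ring of $\mathrm{Hg}(A)$-invariants in $\bigwedge^\bullet V$ is generated in degree $2$. My strategy is therefore to first compute $\mathrm{Hg}(A)$ as precisely as possible in each case, and then invoke the first and second fundamental theorems of invariant theory to check degree-$2$ generation.

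For case (i), $A=E^n$, the Hodge group coincides with that of $E$ itself, so there are two subcases. If $E$ has no complex multiplication then $\mathrm{Hg}(E)=\mathrm{SL}_2$ and $V\cong U^{\oplus n}$ with $U$ the standard $2$-dimensional representation; the fundamental theorems for $\mathrm{SL}_2$ then show that every invariant is a polynomial in the $\binom{n}{2}$ basic skew pairings, each of which is a divisor class, giving $D^p=B^p$. If $E$ has CM then $\mathrm{Hg}(E)$ is a rank-$1$ torus, and the computation becomes a bookkeeping of weights: one checks directly that every invariant monomial in $\bigwedge^\bullet V$ factors through weight-zero degree-$2$ pieces. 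This recovers Tate and Murasaki.

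Cases (ii)--(iv) follow the same template but require more work to pin down $\mathrm{Hg}(A)$. For a simple abelian variety of prime dimension (Tankeev--Ribet) the prime-dimension hypothesis severely restricts the endomorphism algebra and forces $\mathrm{Hg}(A)$ to be as large as the polarization allows---typically the full symplectic group, for which the second fundamental theorem again yields generation in degree $2$---while the CM subcase is handled by the analogous torus argument, using that a prime-dimensional CM type cannot support the additional invariants that would obstruct degree-$2$ generation. For the Kuga--Hall families (iii) and Mattuck's examples (iv) the families are constructed precisely so that the generic member has Mumford--Tate group equal to a prescribed classical group (or an explicit torus read off from the period matrix), after which the invariant-theoretic step is identical.

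The main obstacle throughout is the exact determination of $\mathrm{Hg}(A)$, not the invariant theory that follows it. Computing the Hodge group requires controlling the endomorphism algebra, the polarization form, and---in every CM case---the precise combinatorics of the CM type and its reflex data. It is exactly here that the hypotheses of (i)--(iv) are indispensable: for a general CM abelian variety the Hodge torus can carry extra weight-zero characters in higher exterior degrees (the Weil-type classes), producing Hodge classes not expressible through divisors, so that $D^p(A)\subsetneq B^p(A)$. Ruling out these exceptional invariants---equivalently, verifying that $\mathrm{Hg}(A)$ is maximal for its endomorphism type---is the delicate heart of each of the four results, and is precisely what fails for a general abelian variety.
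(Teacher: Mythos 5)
Your reduction is set up correctly: for an abelian variety the Hodge classes in $\bigwedge^{2p}H^1(A,\mathbb{Q})$ are exactly the invariants of the Hodge group, $B^1(A)$ is the degree-$2$ invariant space, and so $D^p(A)=B^p(A)$ for all $p$ is equivalent to degree-$2$ generation of the invariant algebra. This is the standard modern framework, and it is a genuinely different route from the paper, which states the theorem as a compilation of the four cited results and gives no proof at all (the remark following it notes that Murasaki's proof of (i) was by writing the cohomology explicitly and inducting on $p$, not by invariant theory). For case (i) your sketch is essentially a complete argument, modulo a small count: the basic $\mathrm{SL}_2$-invariants number $n+\binom{n}{2}=\binom{n+1}{2}$ (one symplectic form inside each $\bigwedge^2 U$ summand, plus one pairing for each pair of distinct factors), not $\binom{n}{2}$; this matches the N\'eron--Severi rank of $E^n$ for $E$ without CM.

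For cases (ii)--(iv), however, the proposal has a genuine gap, and you name it yourself: the exact determination of $\mathrm{Hg}(A)$ is not a preliminary to be deferred, it \emph{is} the theorem in each case. Saying that the prime-dimension hypothesis ``typically'' forces the full symplectic group, or that the Kuga--Hall families are ``constructed precisely so that'' the generic Mumford--Tate group is prescribed, restates the conclusions of \cite{tankeev,ribet,kuga-hall} rather than proving them. There is also a mathematical error in the claim that once the group is known ``the invariant-theoretic step is identical'': degree-$2$ generation of the invariant algebra is a special property of the groups that actually occur here, not of classical subgroups of $\mathrm{Sp}(V)$ in general. For special unitary groups acting on Hodge structures of Weil type, the invariant algebra has generators in higher degree (the Weil classes; Mumford's fourfold \cite{pohl} cited in the paper is exactly such an example), which is precisely why $D^p(A)\subsetneq B^p(A)$ can occur. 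So even after the group is computed, each case --- full symplectic group, restriction of scalars of $\mathrm{Sp}_2$ over a totally real field, nondegenerate CM torus --- requires its own verification of degree-$2$ generation; for prime dimension this is Ribet's nondegeneracy argument for the CM type together with his treatment of the real-multiplication case. As it stands, your proposal is a correct and clarifying reduction, but it proves only case (i).
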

\begin{remark}
Tate announced item $i)$ above in a 1964 Seminary, but did not give a proof. A complete proof was given by Murasaki \cite{murasaki} 4 years later. The idea of the proof is simple, write everything explicitly and show that $D^p(E^n)= B^p(E^n)$ by induction on $p$. This proof can't naturally be generalized to all abelian varieties because some abelian varieties have $D^p(A)\neq B^p(A)$ and explicit cohomology computations tends to be very hard to do in the general case.
\end{remark}
\begin{definition}
A Hodge cycle $\alpha\in B^p(A)$ is said to be \textit{exceptional} if it's not in $D^p(A)$, i.e it's not an intersection of divisors.
\end{definition}
As discussed above, some Abelian varieties of Fermat type have exceptional Hodge classes, and the Hodge conjecture is still true. On the other hand, Mumford \cite{pohl}  gave an example of an abelian fourfold of CM type with an exceptional Hodge class for which, after 50 years, we still don't know if it is algebraic. 

Like Hodge cycles that don't come from the induced structure in Fermat varieties, exceptional Hodge classes in abelian varieties are candidates for counter-example to the Hodge conjecture. A complete proof that exceptional hodge classes are algebraic is what is still missing to solve the Hodge conjecture for all Abelian varieties. Certainly, such a proof is as difficult to find as it is to prove the Hodge conjecture for any $X$, abelian or not.
\section{The Griffiths-Green program}\label{sec3}
Around 2006, Griffiths and Green \cite{gg} outlined a method that potentially could prove the Hodge conjecture. The aim of this section is to describe their ideas and discuss some examples. To learn more about this approach please see Griffiths-Green's original paper \cite{gg}, Brosnan et al \cite{patrick} and the wonderful survey \cite{matt}.

Note that if one wants to prove the Hodge conjecture by induction on dimension, then it's enough to assume that $X$ has dimension $n = 2m$ and prove the Hodge conejcture for $(m,m)$-classes, that is $p=m$. Indeed, by Hard Lefschetz (theorem \ref{hl}), it's enough to prove for $(p,p)$-classes where $2p\leq n$. Suppose $2p<n$, let $k:=n-2p$ and $Y$ be a $2p$-dimensional subvariety of $X$ obtained by the complete intersection of $k$ general hyperplane sections of $X$, we can assume by induction that the Hodge conjecture is true for $Y$. Now, let $\alpha\in H^{p,p}(X,\mathbb{Q})$ be a Hodge class, and $i:Y\to X$ the inclusion map, then $i^*(\alpha)$ is an algebraic hodge class $[Z_Y]$ in $Y$. If we let $Y$ varies in family of complete intersections covering $X$, $[Z_Y]$ glue together to an algebraic cycle $[Z]$ on $X$, with the property that $i^*([Z])=i^*(\alpha)$, but by Lefschetz's hyperplane section theorem (\ref{hl}), $i^*$ is injective on $(p,p)$ classes, hence $[Z]=\alpha$.

From now on, we assume that $X$ has dimension $n=2p$ and we are after the Hodge conjecture for $(p,p)$-classes.
Let $Y$ be a hyperplane section of $X$ with inclusion $i:Y\to X$ and $L_X$ the Lefschetz operator, which is the cup product with the Kahler class of $X$. Then we have the following diagram:

\[
\begin{tikzcd}[row sep=3em]
 & H^{n-2}(Y,\mathbb{Q}) \arrow[dr,"i_*"] \\
H^{n-2}(X,\mathbb{Q}) \arrow[ur, "\sim"] \arrow[rr,"L_X"] && H^n(X,\mathbb{Q}) 
\end{tikzcd}
\]

The isomorphism is by weak lefschetz (\ref{lh}). Then by Lefschetz decomposition theorem we have $H^n(X,\mathbb{Q})=Prim^n(X,\mathbb{Q})\oplus i_*(H^{n-2}(Y,\mathbb{Q}))$, and on the level of Hodge components we have:
$$H^{p,p}(X,\mathbb{Q})=Prim^{p,p}(X,\mathbb{Q})\oplus i_*(H^{p-1,p-1}(Y,\mathbb{Q}))$$

Therefore, if we want to use induction to prove the Hodge conjecture it's enough to prove the result for primitive Hodge classes $\alpha\in Prim^{p,p}(X,\mathbb{Q})$.

Set $Hg^p(X):=Prim^{p,p}(X,\mathbb{Q})$ and suppose we are given a very ample line bundle $L\to X$ whose first Chern class $c_1(L)$ is in the class of the Kahler form of $X$.

Let $\bar{S}:=|L|$ the complete linear system of divisors associated with $L$, $\mathcal{X}$ the incidence variety
$$\mathcal{X}=\{(x,s)\in X\times \bar{S}\; |\; s(x)=0\}$$
with natural morphism $\pi:\mathcal{X}\to \bar{S}$, and $\hat{X}$ be the dual variety of $X$ (points $s$ in $\bar{S}$ such that $X_s=\pi^{-1}(s)$ is singular). Let $\mathcal{H}$ be the variation of Hodge structures over $S:=\bar{S}-\hat{X}$ associated to the local system $\mathbb{H}:=R^{2p-1}\pi^{sm}_*\mathbb{Z}(m)$.

Recall from \hyperref[sec1]{Section 1} that a (extended) normal function is a section of the Jacocian bundle $\mathcal{J}_e$, see \ref{jacobian_e}. In our setting, the $X_t$ are given by zero sections of $L$ and are not necessarily a Lefschetz pencil, so we generalize the definition as follows.
\begin{definition}
Let $\mathcal{H}$ be a variation of polarized Hodge structures of weight $-1$ over $S$ and $J(\mathcal{H})$ be the associated Jacobian bundle. We have the following exact sequence:
$$0\to \mathbb{H} \to \mathcal{H}\slash \mathcal{F}^0\to J(\mathcal{H}) \to 0$$
a normal function is a section of $J(\mathcal{H})$.
\end{definition}
As before (see \hyperref[sec1]{Section 1}), taking cohomology of the above sequence, we see there's a map $$[.]:H^0(S,J(\mathcal{H}))\to H^1(S,\mathbb{Z})$$
Let's assume now that $S$ is a quasi-projective variety inside a smooth projective variety $\bar{S}$ with inclusion $i:S\to\bar{S}$.
\begin{definition}
Let $\nu\in H^0(S,J(\mathcal{H}))$ be a normal function. Then the singularity of $\nu$ at $s\in \bar{S}$, denoted by $\sigma_s(\nu)$, is the colimit:
$$\sigma_s(\nu)=\varinjlim_{s\in U} \;[\nu_{U\cap S}]\in (R^1i_*\mathbb{H})_s$$
The natural image of $\sigma_s(\nu)$ in cohomology with rational coefficients will be denoted by $sing_s(\nu)$, and we say $\nu$ is singular in $\bar{S}$ if $sing_s(\nu)\neq 0$ for some $s\in\bar{S}$.
\end{definition}
The variations of Hodge structures that we've considered so far actually satisfy certain additional geometric conditions called admissibility conditions, see \cite{saito,patrick} for definitions. The resulting normal functions are then called admissible normal functions, and we denote by $ANF(S,\mathcal{H})$ the set of all such objects.

Using the above notation, Griffiths and Green \cite{gg} conjectured the following:
\begin{conjecture}\label{gg_c}
Let $\alpha\in Hg^p(X)$ and $\nu_\alpha$ be the associated admissible normal function. Then $\nu_\alpha$ is singular on $\bar{S}=|L^{\otimes d}|$ for some $d>0$.
\end{conjecture}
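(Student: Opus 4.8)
This statement is, despite its phrasing, not something one expects to prove in isolation: it is known (Green--Griffiths \cite{gg}, with the decisive inputs supplied by Brosnan et al.\ \cite{patrick} and by de Cataldo--Migliorini) to be \emph{equivalent} to the Hodge conjecture for primitive $(p,p)$-classes on the $2p$-dimensional $X$. So the realistic plan is to lay out the strategy that reduces the Hodge conjecture to the nonvanishing of $sing_s(\nu_\alpha)$, and to pinpoint the single step that absorbs all of the difficulty. The argument splits into two halves: \emph{detecting} a singularity suffices (a singular $\nu_\alpha$ forces $\alpha$ to be algebraic), while \emph{producing} a singularity after passing to some $L^{\otimes d}$ is the conjecture's genuine content.

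For the detection half I would argue as follows. The Hodge class $\alpha\in Hg^p(X)$ restricts, fibrewise over $S$, to a section of the local system $\mathbb H$; via the Leray filtration its topological shadow is an element of $H^1(S,\mathbb H)$, and admissibility (see \cite{saito,patrick}) lets one lift this to the normal function $\nu_\alpha\in ANF(S,\mathcal H)$ whose class-map value $[\nu_\alpha]$ recovers $\alpha$. The singularity $\sigma_s(\nu_\alpha)=\varinjlim_{s\in U}[\nu_{U\cap S}]$ is then the local obstruction to extending $[\nu_\alpha]$ across the dual variety $\hat X$, and by the theory of the limit mixed Hodge structure it is supported on the vanishing cohomology at $s$. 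The crux is the theorem that $sing_s(\nu_\alpha)\neq 0$ implies $\alpha$ algebraic: a nonzero singularity produces, through the Thom--Gysin sequence of the degenerating fibre, a vanishing cycle pairing nontrivially with $\alpha$, and in this localized codimension-one situation Jacobi inversion \emph{does} hold (cf.\ the discussion after Theorem \ref{lef11}), so Poincar\'e's existence theorem converts the datum into an algebraic cycle supported near the singular fibre.

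The hard part --- in fact the whole problem --- is the passage from $|L|$ to $|L^{\otimes d}|$. The need for it is exactly the failure of Jacobi inversion flagged after Theorem \ref{lef11}: on a fixed linear system the image of the class map need not contain $\alpha$, so $\nu_\alpha$ may have \emph{no} singularities anywhere on $\bar S=|L|$. Raising the degree enlarges $\hat X$ and thereby enriches the vanishing cohomology available to support a singularity, and the heuristic is that for $d\gg 0$ this cohomology becomes large enough to carry any prescribed primitive Hodge class. To make this rigorous I would attempt an asymptotic analysis of the limit mixed Hodge structures along $\hat X$, using the N\'eron models of \cite{patrick} to control $\sigma_s$, and try to show that the obstruction to nonvanishing is governed by a Noether--Lefschetz-type condition that is eventually met as $d\to\infty$. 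This last existence step is precisely the obstacle I do not expect to clear: producing the singularity is not a formal consequence of admissibility, and any honest proof of it would \emph{ipso facto} prove the Hodge conjecture.
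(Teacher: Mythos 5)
You are right about the essential point: this statement is an open conjecture, not a theorem of the paper. The paper never proves it; it only proves (Theorem \ref{gg_theorem}) that Conjecture \ref{gg_c} is \emph{equivalent} to the Hodge conjecture, and your admission that producing the singularity after replacing $L$ by $L^{\otimes d}$ is exactly as hard as the Hodge conjecture matches the paper's framing. Your overall architecture --- a ``detection'' half (a singularity forces algebraicity) and a ``production'' half (existence of the singularity, which carries all the difficulty) --- is also the architecture behind the paper's equivalence theorem.

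However, your detection half contains a genuine error. You claim that $sing_s(\nu_\alpha)\neq 0$ yields an algebraic cycle because ``in this localized codimension-one situation Jacobi inversion \emph{does} hold,'' invoking Poincar\'e's existence theorem. This is precisely what fails: the cycle one must produce is of codimension $p$ on the $(2p-1)$-dimensional singular fibre (or on $X$), not of codimension one, and the paper stresses (citing \cite{shioda1,shioda2}) that Jacobi inversion can fail in codimension greater than one --- that failure is the very reason Lefschetz's proof does not generalize and the reason Griffiths--Green designed an \emph{inductive} scheme instead. The paper's actual argument for this direction is conditional on induction: from $sing_s(\nu_\zeta)\neq 0$ one gets that $\zeta$ restricts nontrivially to $X_s$; one passes to a desingularization $g:\tilde{X_s}\to X$, uses Poincar\'e duality to find a Hodge class $\xi$ with $\xi\cup g^*(\zeta)\neq 0$, then \emph{assumes the Hodge conjecture for the hyperplane sections covering the desingularized fibre} (the inductive hypothesis in lower dimension) to realize $\xi$ as an algebraic class $[W]$, and concludes via the projection formula and Poincar\'e duality that $\zeta$ is algebraic. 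Without that inductive input, your detection step is unsupported. Separately, in the direction ``Hodge conjecture $\Rightarrow$ singularity,'' the paper's mechanism is R.~Thomas's theorem \cite{thomas} that, granting the Hodge conjecture, every primitive Hodge class restricts nontrivially to some hypersurface section; this is fed through the commutative diagram whose map $\beta_s$ is injective on the image of the restriction $\alpha_s$ after replacing $L$ by $L^{\otimes d}$. Your proposed Noether--Lefschetz-type asymptotic analysis along $\hat X$ is not what the literature does and, as you concede, you cannot carry it out --- which is consistent with the statement being open.
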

Interestingly, this conjecture is equivalent to the Hodge conjecture:
\begin{theorem}(Griffiths-Green \cite{gg,patrick,matt}\label{gg_theorem}
Conjecture \ref{gg_c} holds if and only if the Hodge conjecture holds.
\end{theorem}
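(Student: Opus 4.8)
The plan is to prove the two implications separately, and in both directions to exploit the dictionary, implicit in the construction of $\nu_\alpha$, between the Hodge class $\alpha$, the cohomological invariant $[\nu_\alpha]\in H^1(S,\mathbb{H}_{\mathbb{Q}})$, and the local quantity $sing_s(\nu_\alpha)$ living in the stalk of $R^1i_*\mathbb{H}$ at $s$ (tensored with $\mathbb{Q}$). My first task is therefore to record the two facts that drive everything. First, the class $[\nu_\alpha]$ is the image of $\alpha$ under the connecting map generalizing the sequence \ref{classm} to higher-dimensional base, so that $\nu_\alpha$ literally is $\alpha$ spread out over the family and remembers it. Second, by the theory of admissible normal functions and their N\'eron models (Saito, Brosnan--Pearlstein), $sing_s(\nu_\alpha)$ is exactly the obstruction to extending $\nu_\alpha$ across $s$ as a single-valued section, and is computed from the monodromy invariants of the limit mixed Hodge structure at $s$. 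With this dictionary the two implications become statements about when this obstruction can be forced to be nonzero.

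For the implication from the Hodge conjecture to Conjecture \ref{gg_c}, suppose $\alpha=cl_{\otimes\mathbb{Q}}(Z)$ for an algebraic cycle $Z$. After replacing $L$ by $L^{\otimes d}$ with $d\gg 0$ (the Veronese re-embedding, which is precisely the freedom built into the statement), Bertini produces a Lefschetz pencil whose discriminant $\hat{X}$ is rich enough that I can select a singular member $X_{s_0}$ so that the vanishing cycle at $s_0$ pairs nontrivially with the primitive part of $[Z]$. A local analysis of the degeneration of the Abel--Jacobi image then shows that the presence of the cycle forces $sing_{s_0}(\nu_\alpha)\neq 0$: the algebraic cycle makes the normal function genuinely multivalued around $\hat{X}$. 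The technical content here is the passage to large $d$, needed both to guarantee the existence of such favorable nodal degenerations and to ensure the nonvanishing of the local pairing.

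For the converse, I would argue by induction on $n=\dim X$, the base cases being supplied by Theorem \ref{threefolds} and the reduction already carried out to primitive $(p,p)$-classes in dimension $n=2p$. Assume $sing_{s_0}(\nu_\alpha)\neq 0$ for some $s_0\in\bar{S}$. The key point is that the singularity class is a Hodge class of the appropriate weight in the local invariant-cycle data at $s_0$, and by the theory of limit mixed Hodge structures this data is governed by the cohomology of the singular fibre $X_{s_0}$ — equivalently by the vanishing cohomology — which is supported on a variety of dimension strictly less than $n$. The inductive hypothesis therefore renders this singularity class algebraic. Finally, the local invariant cycle theorem together with the Clemens--Schmid sequence, or functorially M.~Saito's decomposition theorem for mixed Hodge modules, propagates this algebraic class back to a codimension-$p$ cycle on $X$ whose class is $\alpha$, using that $[\nu_\alpha]$ determines $\alpha$.

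The hard part will be this last, descending step of the converse: turning a nonzero singularity class into an honest algebraic cycle on $X$. Here one needs the full strength of the theory — admissibility of $\nu_\alpha$, the algebraicity of the singularity locus of an admissible normal function, and the compatibility of the limit mixed Hodge structure with the Hodge conjecture in lower dimension. Making precise that the singularity lands in the \emph{algebraic} part of the vanishing cohomology, and not merely in its $(p,p)$-part, and that it pins down $\alpha$ exactly rather than only modulo the kernels of the restriction maps, is the crux; the remaining steps are either formal or reduce to the (delicate but essentially standard) degeneration computation used in the forward direction.
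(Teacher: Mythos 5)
Your forward implication rests on a mechanism that cannot work, not merely one that is under-detailed. First, the pairing you invoke is ill-formed: the vanishing cycle of a nodal degeneration lives in $H^{2p-1}$ of the nearby smooth fiber, whereas $[Z]$ restricts into $H^{2p}$ of that fiber; in fact the restriction of a primitive $(p,p)$-class to \emph{every} smooth member is zero with rational coefficients (this vanishing is exactly what makes $\nu_\alpha$ well defined), so there is nothing for a vanishing cycle to pair with. Second, and more fundamentally, no Lefschetz pencil can exhibit the singularity you want: at a point of $\hat{X}$ where the fiber has a single node whose vanishing cycle is rationally nonzero --- which is the situation at every singular member of a generic pencil for $d\gg 0$ --- the restriction of $\alpha$ to $X_s$ vanishes rationally, hence $sing_s(\nu_\alpha)=0$ by commutativity of the diagram in \cite{patrick}. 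Rational singularities occur only where vanishing cycles die rationally (highly nodal or reducible members), a locus of codimension $\geq 2$ in $\bar{S}$ that a generic pencil never meets; this is precisely why Conjecture \ref{gg_c} is posed over the full linear system $|L^{\otimes d}|$ rather than over pencils, and it is the higher-codimension avatar of the failure of the classical Lefschetz--Poincar\'e argument \cite{shioda1,gg,matt}. The paper's proof supplies what your Bertini step cannot: Thomas's theorem \cite{thomas}, asserting that if the Hodge conjecture holds then every nonzero primitive Hodge class restricts nontrivially to some singular section (constructed so as to \emph{contain} the cycle $Z$ --- a highly non-generic member of $\hat{X}$), combined with the injectivity of $\beta_s$ on the image of the restriction map $\alpha_s$ after replacing $L$ by $L^{\otimes d}$; nonvanishing of $sing_s(\nu_\alpha)$ then falls out of the commutative diagram.

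In the converse, the step you yourself flag as the crux --- render the singularity class algebraic by induction, then use Clemens--Schmid, the local invariant cycle theorem, or Saito's decomposition theorem to ``propagate'' it back to a cycle on $X$ whose class is exactly $\alpha$ --- is not just hard, it is the wrong shape of argument: none of those tools converts an algebraic class on (a desingularization of) a singular fiber into a cycle on $X$ with a prescribed class, and a device that did so would prove the Hodge conjecture outright, normal functions or not. (Note also that $sing_s(\nu_\alpha)$ lives in the stalk $(R^1i_*\mathbb{H}_{\mathbb{Q}})_s$, which is not the cohomology of a smooth projective variety, so the inductive hypothesis cannot literally be applied to it.) The paper's argument never produces a cycle with class $\alpha$ on the nose. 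From $sing_s(\nu_\alpha)\neq 0$ it deduces only that $\alpha$ pulls back nontrivially under $g:\tilde{X_s}\to X$; Poincar\'e duality on the $(2p-1)$-dimensional variety $\tilde{X_s}$ then yields a Hodge class $\xi\in H^{p-1,p-1}(\tilde{X_s},\mathbb{Q})$ with $\xi\cup g^*(\alpha)\neq 0$; the inductive hypothesis (via the hyperplane-covering argument that opens Section 3) is applied to this \emph{dual} class, giving an algebraic cycle $W$ with $[W]=\xi$; and the projection formula produces the algebraic class $g_*[W]$ on $X$ with $\alpha\cup g_*[W]\neq 0$, from which algebraicity of $\alpha$ is extracted by duality. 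So the induction is applied to a Poincar\'e-dual class on the fiber, not to the singularity itself, and the conclusion comes from a nondegenerate pairing against algebraic classes, not from transporting a class off the fiber. Both directions of your outline would need to be rebuilt around these two pivots --- Thomas's restriction theorem in one direction, the dual-class/projection-formula device in the other.
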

\begin{proof}(Sketch \cite{matt}) The proof of this theorem is somewhat technical, so we only sketch the main ideas involved.

Fix $s\in\hat{X}$ and consider the following diagram:
\[ \begin{tikzcd}[row sep=3em]\label{gg_cd}
Hg^p(X) \arrow{r}{\nu_{(.)}} \arrow[swap]{d}{\alpha_s} & ANF(S,\mathcal{H})\slash J^p(X) \arrow{d}{sing_s(.)} \\%
H^{2p}(X_s) \arrow{r}{\beta_s} & (R^1j_*\mathbb{H}_\mathbb{Q})_s
\end{tikzcd}
\]
where $\alpha_s$ is the restriction and $\beta_s$ is a map that makes the diagram commute, it is described in details in \cite{patrick}, and is injective on the image of $\alpha_s$ (after possibly changing $L$ by $L^{\otimes d}$).

Richard Thomas proved in \cite{thomas} that if the Hodge conjecture is true, every primitive hodge class $\zeta\in Hg^p(X)$ restricts non-trivially to a hypersurface in $X$. If that us true then using the diagram \ref{gg_cd}, we see that for some  $s\in\hat{X}$, $\beta_s(\alpha_s(\zeta))$ is not zero, hence $sing_s(\nu_\zeta)$ is not zero.

Conversely, suppose $sing_s(\nu_\zeta)\neq 0$ for some $s\in\hat{X}$, then $\zeta$ restricts non-trivially in $X_s$. Let $\tilde{X_s}$ the desingularization of $X_s$ and $g:\tilde{X_s}\to X$ the natural inclusion. Then $g^*(\zeta)\neq 0$, by Poincaré duality there is a Hodge class $\xi\in H^{p-1,p-1}(\tilde{X},\mathbb{Q})$ such that $\xi\cup g^*(\zeta)\neq 0 $. Using the exact same argument we used in the beggining of this section, we can cover $\tilde{X}$ with hyperplane sections, assume the Hodge conjecture for the sections and produce an algebraic cycle $W$ on $\tilde{X}$ such that $[W]=\xi$, but then $g^*(\zeta)\cup \xi=g^*(\zeta)\cup [W]\neq 0$, by the projection formula we have $\zeta\cup g_*[W]\neq 0$, by Poincaré duality, $\zeta$ has to be algebraic.
\end{proof}
\begin{remark}
Not a single new case of the Hodge conjecture has been proved so far using theorem \ref{gg_theorem}. But examples of this approach to show known cases were given in \cite{gg}.
\end{remark}
\begin{remark}
A natural question is if conjecture \ref{gg_c} is true, what is the minimal $d>0$ for a fixed $L$? Is $d$ related to any topological or algebraic invariant of $X$ itself?
\end{remark}
\begin{remark}
A interpretation of the General Hodge Conjecture in this setting is discussed in \cite{matt-greg}
\end{remark}
\bibliographystyle{amsplain}

\end{document}